\providecommand{\U}[1]{\protect\rule{.1in}{.1in}}
\newtheorem{theorem}{Theorem}[section]
\newtheorem{lemma}{Lemma}[section]
\newtheorem{definition}{Definition}[section]
\newtheorem{remark}{Remark}[section]
\newtheorem{proposition}{Proposition}[section]
\numberwithin{equation}{section}
\newcommand{\R}{\mathbb R}
\newcommand{\N}{{\mathbb N}}
\newcommand{\G}{{\mathcal{G}}}
\newcommand{\re}{{\text{Re }}}
\newcommand{\px}{\partial_x}
\newcommand{\pt}{\partial_t}
\def\norm#1{\|#1\|}
\providecommand{\norm}[1]{\left\lVert#1\right\rVert}
\numberwithin{equation}{section}
\begin{document}

	\pagenumbering{arabic}	
\title[Control biharmonic: Star graphs]{Controllability for Schrödinger type system with mixed dispersion on compact star graphs}

\author[Capistrano--Filho]{Roberto de A. Capistrano--Filho}
\address{\emph{Departamento de Matem\'atica,  Universidade Federal de Pernambuco (UFPE), 50740-545, Recife (PE), Brazil.}}
\email{roberto.capistranofilho@ufpe.br}
\author[Cavalcante]{M\'arcio Cavalcante}
\address{\emph{Instituto de Matem\'{a}tica, Universidade Federal de Alagoas (UFAL), Macei\'o (AL), Brazil}}
\email{marcio.melo@im.ufal.br}
\author[Gallego]{Fernando A. Gallego}
\address{\emph{Departamento de Matematicas y Estad\'istica, Universidad Nacional de Colombia (UNAL), Cra 27 No. 64-60, 170003, Manizales, Colombia}}
\email{fagallegor@unal.edu.co}

\makeatletter
\@namedef{subjclassname@2020}{\textup{2020} Mathematics Subject Classification}
\makeatother
\subjclass[2020]{35R02, 35Q55, 35G30, 93B05, 93B07} 
\keywords{Exact controllability, Schrödinger type equation, Star graph, Neumann boundary conditions}

\begin{abstract}
In this work we are concerned with solutions to the linear Schrödinger type system with mixed dispersion, the so-called biharmonic Schrödinger equation. Precisely, we are able to prove an exact control property for these solutions with the control in the energy space posed on an oriented star graph structure $\mathcal{G}$ for $T>T_{min}$, with $$T_{min}=\sqrt{ \frac{ \overline{L} (L^2+\pi^2)}{\pi^2\varepsilon(1-  \overline{L} \varepsilon)}},$$ when the couplings and the controls appear only on the Neumann boundary conditions.
\end{abstract}
\maketitle

\section{Introduction}
The fourth-order nonlinear Schr\"odinger (4NLS) equation or biharmonic cubic nonlinear \linebreak Schr\"odinger equation
\begin{equation}
\label{fourtha}
i\partial_tu +\partial_x^2u-\partial_x^4u=\lambda |u|^2u,
\end{equation}
have been introduced by Karpman \cite{Karpman} and Karpman and Shagalov \cite{KarSha} to take into account the role of small fourth-order dispersion terms in the propagation of intense laser beams in a bulk medium with Kerr nonlinearity. Equation \eqref{fourtha} arises in many scientific fields such as quantum mechanics, nonlinear optics and plasma physics, and has been intensively studied with fruitful references (see \cite{Ben,Karpman} and references therein).

In the past twenty years such 4NLS have been deeply studied from different mathematical points of view. For example, Fibich \textit{et al.} \cite{FiIlPa} worked various properties of the equation in the subcritical regime, with part of their analysis relying on very interesting numerical developments. The well-posedness problem and existence of the solutions has been shown (see, for instance, \cite{tsutsumi}) by means of the energy method, harmonic analysis, etc.
\subsection{Dispersive models on star graphs}
The study of nonlinear dispersive models in a metric graph has attracted a lot of attention of mathematicians, physicists, chemists and engineers,  see for details \cite{BK, BlaExn08, BurCas01, Mug15} and references therein. In particular, the framework prototype  (graph-geometry)  for description of these phenomena have been a {\it star graph} $\mathcal G$, namely, on metric graphs with $N$  half-lines of the form $(0, +\infty)$  connecting at a common vertex $\nu=0$, together with a nonlinear equation suitably defined on the edges such as the nonlinear Schr\"odinger equation (see Adami {\it{et al.}} \cite{AdaNoj14, AdaNoj15} and Angulo and Goloshchapova \cite{AngGol17a, AngGol17b}). We note that with the introduction of nonlinearities in the dispersive models, the network provides a nice field, where one can look for interesting soliton propagation and nonlinear dynamics in general.   A central point that makes this analysis a delicate problem is the presence of a vertex where the underlying one-dimensional star graph should bifurcate (or multi-bifurcate in a general metric graph).

Looking at other nonlinear dispersive systems on graph structure, we have some interesting results. For example, related with well-posedness theory, the second author in \cite{Cav}, studied the local well-posedness for the Cauchy problem associated to Korteweg-de Vries equation in a metric star graph with three semi-infinite edges given by one negative half-line and two positives half-lines attached to a common vertex $\nu=0$ (the $\mathcal Y$-junction framework).  Another nonlinear dispersive equation, the Benjamin--Bona--Mahony (BBM) equation, is treated in \cite{bona,Mugnolobbm}. More precisely, Bona and Cascaval \cite{bona} obtained local well-posedness in Sobolev space $H^1$ and Mugnolo and Rault \cite{Mugnolobbm} showed the existence of traveling waves for the BBM equation on graphs. Using a different approach Ammari  and Crépeau  \cite{AmCr1} derived results of well-posedness and, also, stabilization for the Benjamin-Bona-Mahony equation in a star-shaped network with bounded edges.

Recently, in \cite{CaCaGa1},  the authors deals to present answers for some questions left in \cite{CaCaGa} concerning the study of the cubic fourth order Schr\"odinger equation in a star graph structure $\mathcal{G}$. Precisely, they considered $\mathcal{G}$ composed by $N$ edges parameterized by half-lines $(0,+\infty)$ attached with a common vertex $\nu$. With this structure the manuscript studied the well-posedness of a dispersive model on star graphs with three appropriate vertex conditions.

Regarding to the control theory and inverse problems, let us cite some previous works on star graphs. Ignat \textit{et al.} in \cite{Ignat2011} worked on the inverse problem for the heat equation and the Schr\"odinger equation on a tree. Later on, Baudouin and Yamamoto  \cite{Baudouin} proposed a unified - and simpler - method to study the inverse problem of determining a coefficient. Results of stabilization and boundary controllability for KdV equation on star-shaped graphs was also proved in \cite{AmCr,Cerpa,Cerpa1}. Finally, recently, Duca in \cite{Duca,Duca1} showed the controllability of the bilinear Schrödinger equation defined on a compact graph. In booth works, with different main goals, the author showed control properties for this system. 

We caution that this is only a small sample of the extant work on graphs structure for partial differential equations.

\subsection{Functional framework} 
Let us define the graphs $\G$ given by the central node  $0$ and  edges $I_j$, for  $j=1,2,\cdots, N$.  Thus, for any function $f:	\G\rightarrow \mathbb C$, we set $f_j= f|_{I_j},$
$$
L^2(\G):= \bigoplus_{j=1}^{N}L^2(I_j):=\left\lbrace f: \G \rightarrow \R: f_j \in L^2(I_j), j \in \{1,2,\cdot, N\} \right\rbrace, \quad \|f\|_2=\left( \sum_{j=1}^N \|f_j\|_{L^2(I_j)}\right)^{1/2}
$$
and 
$$
\left( f, g\right)_{L^2(\G)}:= \re \int_{-l_1}^0   f_1(x)\overline{g_1(x)}dx +\re \sum_{j=2}^{N}\int_{0}^{l_j}   f_j(x)\overline{g_j(x)}dx.
$$
Also, we need the following spaces
$$
H^m_0(\G):= \bigoplus_{j=1}^{N}H^m_0(I_j):=\left\lbrace f: \G \rightarrow \mathbb C: f_j \in H^m(I_j), j \in \{1,2,..., N\}\right\rbrace, 
$$
where $\partial_x^j f_1(-l_1)=\partial_x^jf_j(l_j)=0, \ j \in \{1,2,..., m-1\}$ and $ f_1(0)=\alpha_j f_j(0),  j \in \{1,2,..., N\}$, with
$$
 \|f\|_{H^m_0(\G)}=\left( \sum_{j=1}^{N} \|f_j\|_{H^m(I_j)}^2\right)^{1/2},
$$
for $m\in \N$ with the natural inner product of  $H^s_0(I_j)$. We often write,
$$
\int_{\mathcal{G}} f d x=\int_{-l_{1}}^{0} f_{1}(x) d x+\sum_{j=2}^{N} \int_{0}^{l_{j}} f_{j}(x) d x
$$
Then the inner products and the norms of the Hilbert spaces $L^2(\G)$ and $H^m_0 (\G)$ are defined by 
\begin{align*}
\langle f, g\rangle_{L^{2}(\mathcal{G})}=\re\int_{\mathcal{G}} f(x) \overline{ g(x)} d x\quad \text { and } \quad \|f\|_{L^{2}(\mathcal{G})}^{2} &=\int_{\mathcal{G}}|f|^{2} d x,
\\
 \langle f, g\rangle_{H_{0}^{m}(\mathcal{G})}=\re \sum_{k\leq m}\int_{\mathcal{G}} \partial^k_x f_{x}(x) \overline{ \partial_x^k g_{x}}(x) d x \quad \text { and } \quad \|f\|_{H_{0}^{m}(\mathcal{G})}^{2} &=\sum_{k\leq m}\int_{\mathcal{G}}\left|\partial_x^k f\right|^{2} d x.
\end{align*}
We will denote $H^{-s}(\G)$ the dual of $H^s_0(\G)$. By using Poincaré inequality, it follows that 
$$
\|f\|_{L^2(\G)}^2 \leq \frac{L^2}{\pi^2} \|\partial_x f\|_{L^2(\G)}^2, \quad \forall f  \in H^1_0(\G),
$$
where $L=\max_{j=1,2,...,N}\left\lbrace l_j \right\rbrace.$Thus, we have that 
\begin{equation}\label{poincare}
 \sum_{k=1 }^m  \|\partial_x^k f\|_{L^2(\G)}^2 \leq \|f\|_{H_{0}^{m}(\mathcal{G})}^{2} \leq  \left(  \frac{L^2}{\pi^2}+1\right)\sum_{k=1 }^m  \|\partial_x^k f\|_{L^2(\G)}^2.
\end{equation}

\subsection{Setting of the problem and main result} Let us now present the problem that we will study in this manuscript. Due the results presented in  work \cite{CaCaGa1}, naturally, we should see what happens for the control properties for a linear Schrödinger type system with mixed dispersion on compact graph structure $\mathcal{G}$ of $(N+1)$ edges $e_{j}$ (where $\left.N \in \mathbb{N}^{*}\right)$, of lengths $l_{j}>0, j \in\{1, . ., N+1\}$, connected at one vertex that we assume to be 0 for all the edges. Precisely, we assume that the first edge $e_{1}$ is parametrized on the interval $I_{1}:=\left(-l_{1}, 0\right)$ and the $N$ other edges $e_{j}$ are parametrized on the interval $I_{j}:=\left(0, l_{j}\right)$. On each edge we pose a linear biharmonic NLS equation $(Bi-NLS)$. On the first edge $(j=1)$ we put no control and on the other edges $(j=2, \cdots, N+1)$ we consider Neumann boundary controls (see Fig. \ref{control}).

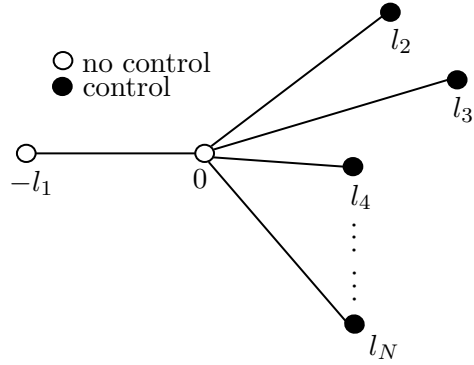
\begin{figure}[h!]
		\centering	
\tikzset{every picture/.style={line width=0.75pt}} 
\begin{tikzpicture}[x=0.50pt,y=0.50pt,yscale=-1,xscale=1]
\draw    (149.5,141) -- (269,141) ;
\draw   (269,141) .. controls (269,137.41) and (272.13,134.5) .. (276,134.5) .. controls (279.87,134.5) and (283,137.41) .. (283,141) .. controls (283,144.59) and (279.87,147.5) .. (276,147.5) .. controls (272.13,147.5) and (269,144.59) .. (269,141) -- cycle ;
\draw   (135.5,141) .. controls (135.5,137.41) and (138.63,134.5) .. (142.5,134.5) .. controls (146.37,134.5) and (149.5,137.41) .. (149.5,141) .. controls (149.5,144.59) and (146.37,147.5) .. (142.5,147.5) .. controls (138.63,147.5) and (135.5,144.59) .. (135.5,141) -- cycle ;
\draw    (281,135.5) -- (409,37.5) ;
\draw  [fill={rgb, 255:red, 0; green, 0; blue, 0 }  ,fill opacity=1 ] (408,34.5) .. controls (408,30.91) and (411.13,28) .. (415,28) .. controls (418.87,28) and (422,30.91) .. (422,34.5) .. controls (422,38.09) and (418.87,41) .. (415,41) .. controls (411.13,41) and (408,38.09) .. (408,34.5) -- cycle ;
\draw    (283,138) -- (458,85.5) ;
\draw    (279,147.5) -- (381.5,267.5) ;
\draw    (282.5,144) -- (380,151) ;
\draw  [fill={rgb, 255:red, 0; green, 0; blue, 0 }  ,fill opacity=1 ] (458,85.5) .. controls (458,81.91) and (461.13,79) .. (465,79) .. controls (468.87,79) and (472,81.91) .. (472,85.5) .. controls (472,89.09) and (468.87,92) .. (465,92) .. controls (461.13,92) and (458,89.09) .. (458,85.5) -- cycle ;
\draw  [fill={rgb, 255:red, 0; green, 0; blue, 0 }  ,fill opacity=1 ] (380,151) .. controls (380,147.41) and (383.13,144.5) .. (387,144.5) .. controls (390.87,144.5) and (394,147.41) .. (394,151) .. controls (394,154.59) and (390.87,157.5) .. (387,157.5) .. controls (383.13,157.5) and (380,154.59) .. (380,151) -- cycle ;
\draw  [fill={rgb, 255:red, 0; green, 0; blue, 0 }  ,fill opacity=1 ] (381.5,267.5) .. controls (382.68,264.11) and (386.59,262.39) .. (390.24,263.65) .. controls (393.9,264.92) and (395.9,268.7) .. (394.73,272.09) .. controls (393.55,275.48) and (389.64,277.2) .. (385.98,275.94) .. controls (382.33,274.67) and (380.32,270.89) .. (381.5,267.5) -- cycle ;
\draw  [fill={rgb, 255:red, 0; green, 0; blue, 0 }  ,fill opacity=1 ] (162.5,90.5) .. controls (162.5,86.91) and (165.63,84) .. (169.5,84) .. controls (173.37,84) and (176.5,86.91) .. (176.5,90.5) .. controls (176.5,94.09) and (173.37,97) .. (169.5,97) .. controls (165.63,97) and (162.5,94.09) .. (162.5,90.5) -- cycle ;
\draw   (163,71) .. controls (163,67.41) and (166.13,64.5) .. (170,64.5) .. controls (173.87,64.5) and (177,67.41) .. (177,71) .. controls (177,74.59) and (173.87,77.5) .. (170,77.5) .. controls (166.13,77.5) and (163,74.59) .. (163,71) -- cycle ;
\draw (127.5,151.9) node [anchor=north west][inner sep=0.75pt]    {$-l_{1}$};
\draw (265,150.4) node [anchor=north west][inner sep=0.75pt]    {$0$};
\draw (413,44.4) node [anchor=north west][inner sep=0.75pt]    {$l_{2}$};
\draw (459.5,96.4) node [anchor=north west][inner sep=0.75pt]    {$l_{3}$};
\draw (383,162.4) node [anchor=north west][inner sep=0.75pt]    {$l_{4}$};
\draw (396.73,275.49) node [anchor=north west][inner sep=0.75pt]    {$l_{N}$};
\draw (182,62) node [anchor=north west][inner sep=0.75pt]   [align=left] {no control};
\draw (182.5,80) node [anchor=north west][inner sep=0.75pt]   [align=left] {control};
\draw (394.48,189.49) node [anchor=north west][inner sep=0.75pt]  [rotate=-89.95] [align=left] {$\cdots$ $\cdots$};
\end{tikzpicture}
		\caption{A compact graph with $N+1$ edges}
		\label{control}
	\end{figure}
Thus, in this work, we consider the following system
\begin{equation}\label{graph}
\begin{cases}
i\pt u_j +\px^2 u_j - \px^4 u_j =0, & (x,t)\in  I_j \times (0,T),\ j=1,2, ..., N\\
u_j(x,0)=u_{j0}(x),                                   & x\in I_j,\ j=1,2, ..., N\\
\end{cases}
\end{equation}
with appropriated boundary conditions as follows
\begin{equation}\label{bound}
\left\lbrace
\begin{split}
&u_1(-l_1,t )=\partial_x u_1(-l_1,t)=0,\\
& u_j(l_j,t)=0, & j \in \{2,3,\cdots, N\},\\
& \partial_x u_j(l_j,t)= h_j(t), &j \in \{2,3,\cdots, N\}, \\
& u_1(0,t)=\alpha_j u_j(0,t),  & j \in \{2,3,\cdots, N\},\\
&\partial_x u_1(0,t)=\sum_{j=2}^{N}\frac{\partial_x u_j(0,t)}{\alpha_j} \\
& \partial_x^2 u_1(0)= \alpha_j \partial_x^2 u_j(0,t), & j \in \{2,3,\cdots, N\}, \\
&\partial_x^3 u_1(0,t)=\sum_{j=2}^{N}\frac{\partial_x^3 u_j(0,t)}{\alpha_j}.
\end{split}\right.
\end{equation}
Here $u_{j}(x, t)$ is the amplitude of propagation of intense laser beams on the edge $e_{j}$ at position $x \in I_{j}$ at time $t, h_{j}=h_{j}(t)$ is the control on the edge $e_{j}\ (j \in\{2, \cdots, N+1\})$ belonging to $L^{2}(0, T)$ and $\alpha_{j} \ (j \in\{2, \cdots, N+1\})$ is a positive constant. The initial data $u_{j 0}$ are supposed to be $H^{-2}(\mathcal{G})$ functions of the space variable.

With this framework in hand, our work deals with the following classical control problem.

\vspace{0.2cm}

\noindent\textbf{Boundary controllability problem:}\textit{ For any $T > 0$, $l_j > 0$, $u_{j0}\in H^{-2}(\mathcal{G})$ and $u_{T}\in H^{-2}(\mathcal{G})$, is it possible to find $N$ Neumann boundary controls $h_j\in L^2(0,T)$ such that the solution $u$ of \eqref{graph}-\eqref{bound} on the tree shaped network of $N+1$ edges (see Fig. \ref{control}) satisfies 
\begin{equation}\label{ect}
u(\cdot,0) = u_{0}(\cdot) \quad \text{ and } \quad u(\cdot,T)=u_{T}(\cdot) ?
\end{equation}}

The answer for that question is given by the following result.
\begin{theorem}\label{Th_Control_N}
For $T>0$ and $l_1, l_2, \cdots l_{N}$ positive real numbers, let us suppose that 
\begin{equation}\label{condition1}
T >  \sqrt{ \frac{ \overline{L} (L^2+\pi^2)}{\pi^2\varepsilon(1-  \overline{L} \varepsilon)}}:=T_{min}
\end{equation}
where
\begin{equation}\label{condition2}
L=\max \left\lbrace l_1, l_2 \cdots, l_{N}\right\rbrace, \quad  \overline{L} =\max \left\lbrace 2l_1, \max \left\lbrace l_2,l_3, \cdots, l_N   \right\rbrace  + l_1\right\rbrace, 
\end{equation}
and
\begin{equation}\label{condition3}
 0<\varepsilon < \frac{1}{ \overline{L} }.
 \end{equation}
Additionally, suppose that the coefficients of the boundary conditions \eqref{bound} satisfies
\begin{equation}\label{putaquepario1}
\sum_{j=2}^{N}\frac{1}{\alpha^2_j}=1 \quad \text{and} \quad \frac{1}{\alpha^2_j}\leq \frac{1}{N-1}.
\end{equation}
Then for any $u_{0},\ u_{T}\in H^{-2}(\mathcal{G})$, there exists a control $h_j(t)\in L^2(0,T)$, for $j=2,...,N$, such that the unique solution $u(x,t)\in C([0,T];H^{-2}(\mathcal{G}))$ of \eqref{graph}-\eqref{bound}, with $h_1(t)=0$, satisfies \eqref{ect}.
\end{theorem}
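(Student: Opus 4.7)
The plan is to apply the Hilbert Uniqueness Method (HUM). I would first introduce the backward adjoint system
\begin{equation*}
\begin{cases}
-i\partial_t \varphi_j + \partial_x^2 \varphi_j - \partial_x^4 \varphi_j = 0, & (x,t) \in I_j \times (0,T),\\
\varphi_j(x,T) = \varphi_{j,T}(x), & x \in I_j,
\end{cases}
\end{equation*}
together with the homogeneous version of \eqref{bound} (i.e.\ $h_j \equiv 0$), and show well-posedness in $C([0,T];H^2_0(\mathcal{G}))$ by a standard semigroup argument, the operator $i(\partial_x^2 - \partial_x^4)$ being skew-adjoint on the graph once the transmission conditions are read correctly. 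The transposition method then yields a unique solution of \eqref{graph}--\eqref{bound} in $C([0,T];H^{-2}(\mathcal{G}))$ for $u_0\in H^{-2}(\mathcal{G})$ and $h_j\in L^2(0,T)$, conditional on the hidden regularity $\partial_x^2 \varphi_j(l_j,\cdot)\in L^2(0,T)$ for smooth adjoint data --- itself a byproduct of the multiplier identity below.

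By HUM duality, the exact controllability \eqref{ect} is equivalent to the observability inequality
\begin{equation*}
\|\varphi(\cdot,0)\|_{H^2_0(\mathcal{G})}^2 \leq C\sum_{j=2}^{N}\int_0^T \bigl|\partial_x^2 \varphi_j(l_j,t)\bigr|^2\, dt
\end{equation*}
for solutions of the homogeneous adjoint system, and the heart of the proof is to derive it. My approach is the multiplier method: I multiply the equation on the $j$-th edge by $q_j(x)\,\overline{\partial_x \varphi_j}$ with a weight chosen so that $q_1(-l_1)=0$ (to kill the uncontrolled endpoint of $I_1$) and $q_j$ is positive on each $I_j$ with $\|q_j\|_{\infty}\leq \overline L$ (for instance $q_j(x)=x+l_1$ on every edge), integrate over $I_j\times(0,T)$, take real parts, and sum over $j$. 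Complementary multipliers of the form $\overline{\varphi_j}$ or $x\,\overline{\varphi_j}$ produce the lower-order pieces needed to reconstruct the full $H^2_0$ norm after invoking the Poincaré-type estimate \eqref{poincare}. Each integration by parts generates a positive interior ``energy'' term comparable to $\int_0^T\!\!\int_{\mathcal{G}}(|\partial_x^2\varphi|^2+|\partial_x\varphi|^2)\,dx\,dt$, the desired observation term $\sum_{j\ge 2}\int_0^T|\partial_x^2\varphi_j(l_j,t)|^2dt$ at the actuated endpoints, vanishing boundary traces at $x=-l_1$, and a collection of vertex contributions at $x=0$.

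The step I expect to be most delicate is the treatment of these vertex terms. Using the transmission relations $\varphi_1(0)=\alpha_j \varphi_j(0)$ and $\partial_x^2 \varphi_1(0)=\alpha_j \partial_x^2 \varphi_j(0)$ together with the Kirchhoff-type identities for $\partial_x \varphi$ and $\partial_x^3 \varphi$, the contributions at $x=0$ can be regrouped into quadratic forms in $(\varphi_j(0),\partial_x \varphi_j(0),\partial_x^2 \varphi_j(0),\partial_x^3 \varphi_j(0))_{j\geq 2}$. The compatibility identity $\sum_{j=2}^{N}\alpha_j^{-2}=1$ in \eqref{putaquepario1} is precisely what is needed so that, after summation in $j$, the contributions from the $N-1$ controlled edges reassemble into exactly the quantity coming from edge $I_1$ but with the opposite sign, producing an exact cancellation; the pointwise bound $\alpha_j^{-2}\leq (N-1)^{-1}$ then controls residual cross-terms via Cauchy--Schwarz. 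This is the structural reason the transmission coefficients must satisfy \eqref{putaquepario1}.

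Once the vertex boundary has been neutralized, the multiplier identity reduces to a bound of the schematic form $c_0 T\,\|\varphi(\cdot,0)\|_{H^2_0(\mathcal{G})}^2 \leq c_1\sum_{j\geq 2}\int_0^T|\partial_x^2\varphi_j(l_j,t)|^2\,dt + R(\varphi)$, with the remainder $R$ absorbed into the left-hand side through the Young splitting $|ab|\leq \varepsilon a^2 + (4\varepsilon)^{-1}b^2$, the Poincaré inequality \eqref{poincare} (which supplies the factor $(L^2+\pi^2)/\pi^2$), and the constraint \eqref{condition3} that $\varepsilon<1/\overline L$. The coefficient of $\|\varphi(\cdot,0)\|_{H^2_0(\mathcal{G})}^2$ that survives is strictly positive exactly when $T^2\pi^2\varepsilon(1-\overline L\varepsilon) > \overline L(L^2+\pi^2)$, that is, when $T>T_{min}$ as in \eqref{condition1}. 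The observability inequality follows, and HUM then delivers the exact controllability asserted in Theorem~\ref{Th_Control_N} with Neumann controls $h_j\in L^2(0,T)$.
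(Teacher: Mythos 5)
Your proposal follows essentially the same route as the paper: HUM/transposition duality reducing \eqref{ect} to an observability inequality for the adjoint system, proved by the Morawetz multiplier identity with an affine weight, vertex cancellation driven exactly by \eqref{putaquepario1}, and then Young's inequality with $\varepsilon$ plus the Poincar\'e bound \eqref{poincare} (together with the conservation laws, which you use implicitly when converting the time-integrated energy into $T$ times the energy at a fixed time) yielding positivity precisely for $T>T_{min}$. Your single multiplier $q_j(x)=x+l_1$ is just the paper's combination of its two steps ($q\equiv 1$ to absorb the trace at $x=-l_1$, then $q=x$), so the argument is the same up to reorganization.
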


\subsection{Outline and structure of the paper} In this article we prove the exact controllability of the Schrödinger type system with mixed dispersion in star graph structure $\mathcal{G}$ of $(N+1)$ edges $e_{j}$ of lengths $l_{j}>0, j \in\{1, . ., N+1\}$, connected at one vertex that we assume to be $0$ for all the edges (see Fig. \ref{control}). Precisely, we are able to prove that solutions of adjoint system associated to \eqref{graph}, with boundary conditions \eqref{bound}, preserve conservation laws in $L^2(\mathcal{G})$, $H^1(\mathcal{G})$ and $H^2(\mathcal{G})$ (see Appendix \ref{Sec4}), which are proved \textit{via} Morawetz multipliers. With this in hand, an \textit{observability inequality} associated with the solution of the adjoint system is proved. Here, the relation between $T>T_{min}$, where  $$T_{min}=\sqrt{ \frac{ \overline{L} (L^2+\pi^2)}{\pi^2\varepsilon(1-  \overline{L} \varepsilon)}},$$ is crucial to prove the result.

\begin{remark}Let us give some remarks in order. 
\begin{itemize}
\item[1.] It is important to point out that the transmission conditions at the central node 0 are inspired by the recent papers \cite{CaCaGa,Cav,GM,MNS}. It is not the only possible choice, and the main motivation is that they guarantee uniqueness of the regular solutions of the $(Bi-NLS)$ equation linearized around 0.
\item[2.] An important fact is that we are able to deal with the mix dispersion in the system \eqref{graph}, that is, with laplacian and bi-laplacian terms in the system. The laplacian term gives us an extra difficulty to deal with the adjoint system associated to \eqref{graph}. Precisely, if we remove the term $\partial^2_x$ in \eqref{graph} and deal only with the  fourth order Schrödinger equation with the boundary conditions \eqref{bound} we can use two different constants $\alpha_j$ and $\beta_j$  in the traces of the boundary conditions.
\item[3.] We are able to control $N+1$ edges with $N-$boundary controls, however, we do not have the sharp conditions on the lengths $l_j$. Moreover, the time of control $T>T_{min}$ is not sharp, but we get an explicit constant in the observability inequality. In this way, these two problems are open.   
\end{itemize}
\end{remark}  

To end our introduction, we present the outline of the manuscript. Section \ref{Sec2} is related with the well-posedness results for the system \eqref{graph}-\eqref{bound} and its adjoint. In Section \ref{Sec3}, we give a rigorous proof of observability inequality, and with this in hand, we are able to prove Theorem \ref{Th_Control_N}. In Appendix \ref{Sec4} we present key lemmas using Morawetz multipliers which are crucial to prove the main result of the paper.


\section{Well-posedness results}\label{Sec2}
We first study the homogeneous linear system (without control) and the adjoint system associated to \eqref{graph}-\eqref{bound}. After that, the linear biharmonic Schrödinger equation with regular initial data and controls is studied. 

\subsection{Study of the linear system} In this section we consider the following linear model
\begin{equation}\label{graph_1}
\begin{cases}
i\pt u_j +\px^2 u_j - \px^4 u_j =0, & (t,x)\in (0,T) \times   I_j,\ j=1,2, ..., N\\
u_j(0,x)=u_{j0}(x),                                   & x \in I_j,\ j=1,2, ..., N\\
\end{cases}
\end{equation}
with the boundary conditions
\begin{equation}\label{bound1}
\left\lbrace
\begin{split}
&u_1(-l_1,t )=\partial_x u_1(-l_1,t)=0\\
&u_j(l_j,t)= \partial_x u_j(l_j,t)=0, & j \in \{2,3,\cdots, N\}, \\
& u_1(0,t)=\alpha_j u_j(0,t),  & j \in \{2,3,\cdots, N\}, \\
&\partial_x u_1(0,t)=\sum_{j=2}^{N}\frac{\partial_x u_j(0,t)}{\alpha_j} \\
& \partial_x^2 u_1(0)= \alpha_j \partial_x^2 u_j(0,t), &j \in \{2,3,\cdots, N\}, \\
&\partial_x^3 u_1(0,t)=\sum_{j=2}^{N}\frac{\partial_x^3 u_j(0,t)}{\alpha_j}.
\end{split}\right.
\end{equation}
Additionally, from now on we use the notation introduced in the introduction of the manuscript. 

Let us consider the differential operator $$A: u=\left(u_{1}, \cdots, u_{N+1}\right) \in \mathcal{D}(A) \subset L^{2}(\mathcal{G}) \mapsto i \partial_x^2 u - i \partial_x^4 u \in L^{2}(\mathcal{R})$$
with domain defined by
\begin{equation}\label{b1}
D(A):= \left\lbrace u \in \prod_{j=1}^N H^4(I_j) \cap V   :  	 \partial_x u_1(0)=\sum_{j=2}^{N}\frac{\partial_x u_j(0)}{\alpha_j},  \,\, \partial_x^3 u_1(0)=\sum_{j=2}^{N}\frac{\partial_x^3 u_j(0)}{\alpha_j}  \right\rbrace,
\end{equation}
where
\begin{multline}\label{b2}
V=\left\lbrace   u \in \prod_{j=1}^N H^2(I_j) :  u_1(-l_1 )=\partial_x u_1(-l_1)=u_j(l_j)=\partial_x u_j(l_j)=0,  \right. \\
\left. u_1(0)=\alpha_j u_j(0),  \partial_x^2 u_1(0)= \alpha_j \partial_x^2 u_j(0), \quad j \in \{2,3,\cdots, N\}. 	\right\rbrace
\end{multline}

Then we can rewrite the homogeneous linear system \eqref{graph_1}-\eqref{bound1} takes the form
\begin{equation}\label{gen}
\begin{cases}
u_t (t)= Au(t),  & t>0 \\
u(0)=u_0 \in L^2(\G).
\end{cases}
\end{equation}
The following proposition guarantees some properties for the operator $A$. Precisely, the following holds.
\begin{proposition}\label{selfadjoint}
The operator $A:D(A)\subset L^2(\mathcal G)\rightarrow L^2(\mathcal G)$ is self-adjoint in $L^2(\G)$.
\end{proposition}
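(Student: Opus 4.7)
The plan is to verify the three classical ingredients for self-adjointness of a Schrödinger-type operator built from a formally symmetric differential expression on a star graph: density of $D(A)$, the pairing identity $(Au,v)_{L^2(\mathcal G)}=(u,Av)_{L^2(\mathcal G)}$ for $u,v\in D(A)$, and the domain inclusion $D(A^\ast)\subseteq D(A)$. For density, I would observe that the subspace of $u=(u_j)$ with each $u_j\in C_c^\infty(I_j)$ supported strictly inside the open edge (away from the vertex $0$ and from the outer endpoint) is already contained in $D(A)$, because every boundary and transmission condition reduces to $0=0$ on such $u$, and this subspace is dense in $L^2(\mathcal G)$.

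For the pairing identity I would integrate by parts twice on each edge in the $-i\partial_x^4$ term and once in the $i\partial_x^2$ term. The boundary contributions at $x=-l_1$ and at $x=l_j$ ($j\geq 2$) vanish immediately from the Dirichlet/Neumann conditions encoded in $V$. The heart of the computation is the cancellation at the central vertex $0$: orienting $I_1$ with outward normal $+1$ and each $I_j$ ($j\geq 2$) with outward normal $-1$, the biharmonic boundary contribution at $0$ takes the form
\begin{equation*}
\pL \partial_x^3 u_1\,\overline{v_1}-\partial_x^2 u_1\,\partial_x\overline{v_1}+\partial_x u_1\,\partial_x^2\overline{v_1}-u_1\,\partial_x^3\overline{v_1}\pR(0)-\sum_{j=2}^{N}\pL \partial_x^3 u_j\,\overline{v_j}-\partial_x^2 u_j\,\partial_x\overline{v_j}+\partial_x u_j\,\partial_x^2\overline{v_j}-u_j\,\partial_x^3\overline{v_j}\pR(0).
\end{equation*}
Inserting $u_j(0)=u_1(0)/\alpha_j$ and $\partial_x^2 u_j(0)=\partial_x^2 u_1(0)/\alpha_j$ together with the mirror identities for $v$, and then collapsing the remaining sums using $\sum_{j=2}^{N}\partial_x u_j(0)/\alpha_j=\partial_x u_1(0)$ and $\sum_{j=2}^{N}\partial_x^3 u_j(0)/\alpha_j=\partial_x^3 u_1(0)$ together with the analogous identities for $v$, the two blocks of eight terms telescope to $0$. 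The lower-order contribution from $i\partial_x^2$ cancels by the same mechanism using only the first two transmission identities.

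Finally, to establish $D(A^\ast)\subseteq D(A)$, I would pick $v\in D(A^\ast)$ with $A^\ast v=w\in L^2(\mathcal G)$ and first test against $u\in D(A)$ supported strictly inside a single edge $I_j$. The distributional identity $i\partial_x^2 v_j-i\partial_x^4 v_j=w_j$ then holds on $I_j$, and interior elliptic regularity promotes $v_j$ to $H^4_{\mathrm{loc}}(I_j)$. Returning to $(Au,v)=(u,A^\ast v)$ and rerunning the integration-by-parts computation of the symmetry step, what remains is the boundary pairing at $\pm l_1$, $l_j$ and $0$; its vanishing for every admissible $u$ forces, in order, $v_j\in H^4(I_j)$ up to the endpoints, the outer Dirichlet/Neumann conditions $v_1(-l_1)=\partial_x v_1(-l_1)=0$ and $v_j(l_j)=\partial_x v_j(l_j)=0$, and finally the mirror transmission conditions on $v$ at $0$.

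The hard part will be this last extraction. The four scalar transmission relations defining $D(A)$ at $0$ cut out a $2N$-dimensional linear subspace of the $4N$-dimensional jet space $\{(\partial_x^k u_j(0))_{k,j}\}$, and one needs, for each independent direction in that subspace, an element of $D(A)$ whose jet at $0$ realizes it while its jets at the outer endpoints vanish. Polynomial bump constructions on each edge supply the required lifts. Assembled against the outward-normal sign convention they deliver a skew-Hermitian boundary form on the jets at $0$; the symmetry calculation of the second step, read now with the roles of $u$ and $v$ swapped, says exactly that the admissible subspace is Lagrangian for this form, so its annihilator coincides with itself, and this self-duality is what closes $D(A^\ast)\subseteq D(A)$ and hence gives self-adjointness.
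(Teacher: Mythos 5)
Your proposal is correct and follows essentially the same route as the paper: symmetry is obtained by edge-wise integration by parts, with the outer boundary terms killed by the Dirichlet/Neumann conditions and the vertex terms cancelling exactly through the transmission relations $u_1(0)=\alpha_j u_j(0)$, $\partial_x^2u_1(0)=\alpha_j\partial_x^2u_j(0)$, $\partial_xu_1(0)=\sum_j\partial_xu_j(0)/\alpha_j$, $\partial_x^3u_1(0)=\sum_j\partial_x^3u_j(0)/\alpha_j$, after which self-adjointness is reduced to $D(A^*)=D(A)$. The only difference is that the paper disposes of this last step with ``it is not hard to see,'' while you sketch an actual argument (interior regularity, polynomial lifts of admissible jets, and the fact that the $2N$-dimensional admissible subspace is isotropic for the vertex boundary form); that is the more complete treatment --- just note that to conclude the subspace equals its own annihilator you should also record that the combined second- and fourth-order boundary form is nondegenerate on the $4N$-dimensional jet space (a direct determinant check), and that $H^4$ regularity up to the endpoints comes from solving the ODE $\partial_x^4v_j=\partial_x^2v_j+iw_j$ on each edge rather than from the boundary pairing itself.
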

\begin{proof}
 Let us first to prove that $A$ is a symmetric operator. To do this let $u$ and $v$ in $D(A)$. Then, by approximating $u$ and $v$ by $C^4(\mathcal{G})$ functions, integrating by parts and using the boundary conditions \eqref{b1} and \eqref{b2}  we have that
\begin{equation*}
\begin{split}
(Au,v)_{L^{2}(\mathcal{G})}&
= \re\int_{-l_1}^0   (Au)_1(x)\overline{v_1(x)}dx + \re \sum_{j=2}^{N} \int_{0}^{l_j}   (Au)_j(x)\overline{v_j(x)}dx\\
&=\re \int_{-l_1}^0   (i \partial_x^2 u_1 - i \partial_x^4 u_1)\overline{v_1(x)}dx + \re \sum_{j=2}^{N}\int_{0}^{l_j}   (i \partial_x^2 u_j - i \partial_x^4 u_j)(x)\overline{v_j(x)}dx\\
&=\re \int_{-l_1}^0  {u_1} (i \partial_x^2 \overline{v}_1 - i \partial_x^4 \overline{v}_1)dx +\re \sum_{j=2}^{N}\int_{0}^{l_j} {u_j}  (i \partial_x^2 \overline{v}_j - i \partial_x^4 \overline{v}_j)dx\\
&\quad \quad + \re i\sum_{j=1}^{N} \left[\partial_xu_j\overline{v}_j-u_j\partial_x\overline{v}_j-\partial_x^3u_j\overline{v}_j+\partial_x^2u_j\partial_x\overline{v}_j-\partial_xu_j\partial_x^2\overline{v}_j+u_j\partial_x^3\overline{v}_j\right]_{\partial\mathcal{G}}\\
&=(u,Av)_{L^2(\mathcal{G})},\ \forall\ u,v\in D(A),
\end{split}
\end{equation*}
that is, $A$ is symmetric. It is not hard to see that $D(A^*)=D(A)$, so $A$ is self-adjoint. This finishes the proof.
\end{proof}

By using semigroup theory, $A$ generates a strongly continuous unitary group on $L^2(\G)$, and for any $u_0=(u_{10}, u_{20},..., u_{N0}) \in  L^2(\G)$ there exists a unique mild solution $u \in C([0; T];L^2(\G))$ of \eqref{gen}. Furthermore, if $u_0 	\in D(A)$, then \eqref{gen} has a classical solution satisfying  $u \in C([0; T];D(A)) \cap C^1([0; T];L^2(\G))$. Summarizing, we have the following result.  

\begin{proposition}\label{mild}
	Let $u_0=(u_{10}, u_{20},..., u_{N0})\in H_0^k(\mathcal G)$, for $k\in\{0,1,2,3,4\}$. Then the linear system \eqref{graph_1} with boundary conditions \eqref{bound1}  has a unique solution $u$ on the space $C([0,T]:H_0^k(\mathcal G))$. In particular, for $k=4$ we get  a classical solution and for the other cases ($k\in\{0,1,2,3\}$) the solution is a mild solution. 
\end{proposition}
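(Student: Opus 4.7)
The plan is to upgrade Proposition \ref{selfadjoint} to a $C_0$-group on $L^2(\mathcal{G})$ via Stone's theorem, and then propagate that regularity along the scale $\{H_0^k(\mathcal{G})\}_{k=0}^4$. By Proposition \ref{selfadjoint} combined with Stone's theorem, $A$ generates a strongly continuous unitary group $\{S(t)\}_{t \in \mathbb{R}}$ on $L^2(\mathcal{G})$. For any $u_0 \in L^2(\mathcal{G})$, the function $u(t) := S(t) u_0$ belongs to $C([0,T]; L^2(\mathcal{G}))$ and is by definition a mild solution of \eqref{gen}, with $\|u(t)\|_{L^2(\mathcal{G})} = \|u_0\|_{L^2(\mathcal{G})}$ for every $t\in[0,T]$; this settles the case $k=0$.

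If additionally $u_0 \in D(A)$, then standard semigroup theory gives a classical solution $u \in C([0,T]; D(A)) \cap C^1([0,T]; L^2(\mathcal{G}))$. Since, by \eqref{b1}--\eqref{b2}, the domain $D(A)$ is continuously embedded in $\prod_{j=1}^N H^4(I_j)$ and respects exactly the transmission/boundary conditions that define $H_0^4(\mathcal{G})$, this also handles the case $k=4$ after identifying the graph norm of $A$ with the $H_0^4$-norm on $D(A)$ via the closed graph theorem.

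For the intermediate regularities $k \in \{1,2\}$ the strategy is to use the conservation laws established in Appendix \ref{Sec4} through Morawetz-type multipliers. Those identities give
$$\|S(t) u_0\|_{H_0^k(\mathcal{G})} = \|u_0\|_{H_0^k(\mathcal{G})}$$
first for smooth enough data (say $u_0 \in D(A)\cap H_0^k(\mathcal{G})$), and then, by density of $D(A) \cap H_0^k(\mathcal{G})$ in $H_0^k(\mathcal{G})$ and continuous extension, for every $u_0 \in H_0^k(\mathcal{G})$. Hence $\{S(t)\}$ restricts to a $C_0$-group of isometries on each $H_0^k(\mathcal{G})$, $k \in \{0,1,2\}$. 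The remaining case $k=3$ follows by real interpolation between the endpoint spaces $H_0^2(\mathcal{G})$ and $H_0^4(\mathcal{G})$, on which the group already acts isometrically, so that $\{S(t)\}$ extends to a $C_0$-group of isometries on the interpolation space $H_0^3(\mathcal{G})$ as well.

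The main obstacle sits in the conservation-law step: one must ensure that every boundary contribution produced by the Morawetz multiplier cancels at the central vertex, so that the $H_0^k$-norm is truly preserved by the flow. This is exactly where the transmission conditions \eqref{b1}--\eqref{b2} and the algebraic balance forced by the coefficients $\alpha_j$ become essential, and it is the content of the technical lemmas relegated to Appendix \ref{Sec4}. Granted those cancellations, the remainder of the proof is essentially bookkeeping: Stone's theorem yields $k=0$ and $k=4$, the Morawetz-based conservation laws upgrade the group to an isometry on $H_0^k(\mathcal{G})$ for $k \in \{1,2\}$ via density, and interpolation fills in $k=3$.
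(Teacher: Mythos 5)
Your backbone—Stone's theorem giving a unitary group on $L^2(\mathcal{G})$ for $k=0$, and classical solutions $u\in C([0,T];D(A))\cap C^1([0,T];L^2(\mathcal{G}))$ for data in $D(A)$ covering $k=4$—is exactly what the paper does (the paper in fact offers nothing beyond this semigroup statement and simply asserts the intermediate cases). The genuine problem is your step for $k=1$: Lemma \ref{conservations} does \emph{not} conserve the $H^1_0$ norm. The multiplier computation yields conservation of $\|u(t)\|_{L^2(\mathcal{G})}$ (identity \eqref{L2_c}) and of the \emph{combined} quantity $\|\partial_x u(t)\|_{L^2(\mathcal{G})}^2+\|\partial_x^2 u(t)\|_{L^2(\mathcal{G})}^2$ (identity \eqref{H12_c}); the $H^1$ seminorm alone is not an invariant of the flow $i\partial_t u=-\partial_x^2u+\partial_x^4u$, so the asserted identity $\|S(t)u_0\|_{H^1_0(\mathcal{G})}=\|u_0\|_{H^1_0(\mathcal{G})}$ is simply unavailable. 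For $k=2$ the two conserved quantities together do control the full $H^2_0$ norm, so that case is sound in spirit, but $k=1$ must be handled the same way you handle $k=3$ (interpolation between $L^2(\mathcal{G})$ and $H^2_0(\mathcal{G})$, or equivalently by working on the scale of fractional power domains of $A$); as written, that step fails.

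A second, subtler gap is the density/extension step. You extend the isometry from $D(A)\cap H_0^k(\mathcal{G})$ by density, but $D(A)$ carries the additional vertex condition $\partial_x u_1(0)=\sum_{j=2}^{N}\alpha_j^{-1}\partial_x u_j(0)$, and the trace of $\partial_x u$ at the vertex is a \emph{bounded} linear functional at the $H^2$ level; hence the closure of $D(A)$ in the $H^2$ topology is a proper closed subspace unless the target space is defined with that same condition, and density of $D(A)\cap H^k_0(\mathcal{G})$ in $H^k_0(\mathcal{G})$ as the paper defines these spaces is not automatic—it is precisely the point where the invariance of $H^k_0(\mathcal{G})$ under the flow could fail. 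The robust route is to work with $D(|A|^{k/4})$ (the form domain at the $H^2$ level, $D(A)$ at the $H^4$ level), on which invariance and strong continuity of the group are immediate, and then identify these domains with the $H^k_0$-type spaces; this identification, like the identification $D(A)\simeq H^4_0(\mathcal{G})$ you invoke via the closed graph theorem, is where the real work lies, and it is work the paper itself also omits.
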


Now, we deal with the adjoint system associated to \eqref{graph_1}-\eqref{bound1}. As the operator $A=i\partial_x^2-i\partial_x^4$ is self adjoint (see Proposition \ref{selfadjoint})  the adjoint system is defined as follows
\begin{equation}\label{adj_graph_1a}
\begin{cases}
i\pt v_j +\px^2 v_j - \px^4 v_j =0, & (t,x)\in (0,T) \times   I_j,\ j=1,2, ..., N\\
v_j(T,x)=v_{jT}(x),                                   & x \in I_j,\ j=1,2, ..., N\\
\end{cases}
\end{equation}
with the boundary conditions
\begin{equation}\label{adj_bound1}
\left\lbrace
\begin{split}
&v_1(-l_1,t )=\partial_x v_1(-l_1,t)=0\\
&v_j(l_j,t)=\partial_x v_j(l_j,t)=0, & j \in \{2,3,\cdots, N\}, \\
& v_1(0,t)=\alpha_j v_j(0,t),  & j \in \{2,3,\cdots, N\}, \\
&\partial_x v_1(0,t)=\sum_{j=2}^{N}\frac{\partial_x v_j(0,t)}{\alpha_j} \\
& \partial_x^2 v_1(0)= \alpha_j \partial_x^2 v_j(0,t), &j \in \{2,3,\cdots, N\}, \\
&\partial_x^3 v_1(0,t)=\sum_{j=2}^{N}\frac{\partial_x^3 v_j(0,t)}{\alpha_j}.
\end{split}\right.
\end{equation}
Also, as $A=A^*$ we have that $D(A^{*})=D(A)$ and the proof of well-posedness is the same that in Proposition \ref{mild}.


\section{Exact boundary controllability}\label{Sec3}
This section is devoted to the analysis of the exact controllability property for the
linear system corresponding to \eqref{graph} with boundary control \eqref{bound}. Here, we will present the answer for the control problem presented in the introduction of this work. First, let us present two definitions that  will be important for the rest of the work.
\begin{definition}
Let $T > 0$, The system \eqref{graph}-\eqref{bound} is exactly controllable in time $T$ if for any initial and final data $u_0\,, u_T\in H^{-2}(\mathcal{G})$  there exist control functions $h_j\in L^2(0,T)$, $j\in \{2,3,\cdots, N\}$, such that solution $u$ of \eqref{graph}-\eqref{bound} on the tree shaped network of $N+1$ edges satisfies \eqref{ect}. In addition, when $u_T=0$ we said that the system \eqref{graph}-\eqref{bound} is null controllable in time $T$.
\end{definition}

Now on, consider the transposition solution to \eqref{graph}-\eqref{bound}, with $h_1(t)=0$, which is given by the following.
\begin{definition}
We say $u\in L^{\infty}(0,T;H^{-2}(\mathcal{G}))$ is solution of \eqref{graph}-\eqref{bound}, with $h_1(t)=0$,
in the transposition sense if and only if 
$$
\sum_{j=1}^{N}\left(\int_0^T\left\langle u_j(t),f_j(t) \right\rangle dt+i\langle u_j(0),v_j(0)\rangle\right)+\sum_{j=2}^{N}\left(\int_{\mathcal{G}}h_j\partial^2_x\overline{v_j}dx\right)=0,
$$
for every $f\in L^2(0,T;H^2_0(\mathcal{G}))$, where $v(x,t)$ is the mild solution to the problem \eqref{adj_graph_1a}-\eqref{adj_bound1} on the space $C([0,T];H_0^2(G))$, with $v(x,T)=0$, obtained in Proposition \ref{mild}. Here, $\left\langle \cdot,\cdot \right\rangle$ means the duality between the spaces  $H^{-2}(\mathcal{G})$ and $H^2_0(\mathcal{G})$.
 \end{definition}
 
With this in hand, the following lemma gives an equivalent condition for the exact controllability property.

\begin{lemma}\label{L_CECP_1}
Let $u_T\in H^{-2}(\mathcal{G})$. Then, there exist controls  $h_j(t)\in L^2(0,T)$, for $j=2,...,N$, such that the solution $u(x,t)$ of \eqref{graph}-\eqref{bound}, with $h_1(t)=0$, satisfies \eqref{ect} if and only if
\begin{equation}\label{CECP_1}
i\sum_{j=1}^N\int_{I_j}\langle u_j(T),\overline{v}_j(T)\rangle dx=\sum_{j=2}^{N}\int_0^Th_j(t)\partial^2_xv_j(l_j,t)dt,
\end{equation}
where $v$ is solution of \eqref{adj_graph_1a}-\eqref{adj_bound1}, with initial data $v(x,T)=v(T)$. 
\end{lemma}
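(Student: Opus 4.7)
The strategy is a standard duality argument: I will multiply the controlled equation by a solution $\overline{v}$ of the adjoint system and integrate by parts to obtain an identity, first at the regular level and then extended to $u_0, u_T \in H^{-2}(\mathcal{G})$ by density and the transposition definition. Both directions of the lemma will follow immediately from this identity combined with the transposition characterization. For the regular step, I take $u$ a classical solution of \eqref{graph}-\eqref{bound} (with $h_1 = 0$ and smooth data) and $v$ a classical solution of \eqref{adj_graph_1a}-\eqref{adj_bound1}, both supplied by Proposition \ref{mild}.

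Differentiating $t \mapsto \langle u(t), \overline{v}(t)\rangle_{L^2(\mathcal{G})}$ in time and substituting the two equations, the $i\partial_t$ contributions telescope after time integration to $i\langle u(T), \overline{v}(T)\rangle - i\langle u(0), \overline{v}(0)\rangle$, while the mixed-dispersion part $\partial_x^2 - \partial_x^4$ is antisymmetric modulo boundary traces. Four integrations by parts in $x$ on each edge $I_j$ reduce the remainder to a boundary functional of the shape
$$B(u,\overline{v}) = u\,\partial_x\overline{v} - \partial_x u\,\overline{v} + \partial_x^3 u\,\overline{v} - \partial_x^2 u\,\partial_x \overline{v} + \partial_x u\,\partial_x^2 \overline{v} - u\,\partial_x^3 \overline{v},$$
evaluated at the endpoints of each $I_j$. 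The clamped conditions at $x = -l_1$ kill $B(u_1, \overline{v}_1)|_{-l_1}$, and at $x = l_j$ ($j \geq 2$) the conditions $u_j = v_j = \partial_x v_j = 0$ leave only the surviving term $\partial_x u_j\,\partial_x^2 \overline{v}_j = h_j(t)\,\partial_x^2 \overline{v}_j(l_j, t)$, which after time integration reproduces the right-hand side of \eqref{CECP_1}.

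The delicate point, and the main obstacle, is to verify that the six vertex contributions at $x = 0$ cancel between edge $1$ and the sum over $j \geq 2$. This is done by substituting the transmission conditions pairwise: the matching identities $u_1(0) = \alpha_j u_j(0)$ and $\partial_x^2 u_1(0) = \alpha_j \partial_x^2 u_j(0)$ (with their $v$-analogues) rewrite each even-order edge-$1$ term as a weighted sum over $j \geq 2$, while the Kirchhoff-type identities $\partial_x u_1(0) = \sum_{j\geq 2} \partial_x u_j(0)/\alpha_j$ and $\partial_x^3 u_1(0) = \sum_{j\geq 2} \partial_x^3 u_j(0)/\alpha_j$ (and their adjoint analogues) handle the odd-order terms. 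Each edge-$1$ contribution then matches exactly the corresponding summand over $j \geq 2$, so the total vertex contribution vanishes. Assembling the computations yields, at the regular level,
$$i\langle u(T), \overline{v}(T)\rangle - i\langle u(0), \overline{v}(0)\rangle = \sum_{j=2}^N \int_0^T h_j(t)\,\partial_x^2 \overline{v}_j(l_j, t)\,dt.$$

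To conclude, I invoke the hidden regularity of the adjoint trace: $\partial_x^2 v_j(l_j, \cdot) \in L^2(0,T)$ depends continuously on $v(T) \in H^2_0(\mathcal{G})$, so both sides of the identity extend by continuity to $u_0, u_T \in H^{-2}(\mathcal{G})$ and $h_j \in L^2(0,T)$, in which low-regularity setting the identity is precisely what defines the transposition solution of \eqref{graph}-\eqref{bound}. Reading \eqref{CECP_1} as the reduced form of this identity (the initial-trace term being either absorbed into the target $u_T$ via the linearity of the free evolution of $u_0$, or incorporated in the transposition formulation), the equivalence is then immediate: if controls driving $u_0$ to $u_T$ exist, the identity forces \eqref{CECP_1} for every admissible $v(T) \in H^2_0(\mathcal{G})$; conversely, any control satisfying \eqref{CECP_1} produces, through the transposition characterization, a solution meeting $u(T) = u_T$.
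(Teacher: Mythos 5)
Your proposal is correct and takes essentially the same route as the paper, whose proof of Lemma \ref{L_CECP_1} is precisely the duality computation you carry out: multiplying \eqref{graph}--\eqref{bound} (with $h_1=0$) by the adjoint solution and integrating by parts over $\mathcal{G}\times(0,T)$, with the transmission conditions cancelling the vertex traces and the conditions at $x=l_j$ leaving only the term $h_j(t)\,\partial_x^2\overline{v}_j(l_j,t)$. Your added details (the term-by-term vertex cancellation, the extension to $H^{-2}$ data through the transposition definition) simply make explicit what the paper's one-line proof leaves implicit.
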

\begin{proof}
Relation \eqref{CECP_1} is obtained multiplying \eqref{graph}-\eqref{bound}, with $h_1(t)=0$, by the solution $v$ of \eqref{adj_graph_1a}-\eqref{adj_bound1}  and integrating by parts on $\mathcal{G}\times(0,T)$.
\end{proof}

\subsection{Observability inequality} A fundamental role will be played by the following observability result, which together with Lemma \ref{L_CECP_1}  give us Theorem \ref{Th_Control_N}.

\begin{proposition}\label{Oinequality_1}
Let $l_j>$ for any $j\in \{1,\cdots,N+1\}$ satisfying \eqref{condition1} and assume
that \eqref{putaquepario1} holds. There exists a positive constant $T_{min}$ such that if $T>T_{min}$, then the following inequality holds
\begin{equation}\label{OI_2}
\norm{v(x,T)}_{H^2_0(\mathcal{G})}^2\leq C\sum_{j=2}^{N}\norm{\partial^2_xv_j(l_j,t)}^2_{L^2(0,T)}
\end{equation}
for any $v=\left(v_{1}, v_{2}, \cdots, v_{N+1}\right)$ solution of \eqref{adj_graph_1a}-\eqref{adj_bound1} with final condition $v_{T}=\left(v_{1}^{T}, v_{2}^{T}, \cdots, v_{N+1}^{T}\right) \in H^2_0(\mathcal{G})$ and for a positive constant $C >0$.
\end{proposition}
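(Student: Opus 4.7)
The plan is to derive \eqref{OI_2} by a Morawetz-type multiplier method applied on the whole graph, combined with the conservation of the $L^2$, $H^1$, and $H^2$ norms of the adjoint flow proved in Appendix \ref{Sec4}. By density and Proposition \ref{mild} I may assume $v\in C([0,T];D(A))$; the $H^2_0(\mathcal{G})$-conservation then yields $\|v(\cdot,t)\|_{H^2_0(\mathcal{G})}=\|v_T\|_{H^2_0(\mathcal{G})}$ for every $t\in[0,T]$, so any multiplier identity whose bulk is a time integral of $\|v\|_{H^2_0}^2$ automatically produces a factor of $T$ on the left-hand side.

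The concrete step is to test each edge equation $i\partial_t v_j+\partial_x^2 v_j-\partial_x^4 v_j=0$ against a weighted gradient of the form $q_j(x)\,\overline{\partial_x v_j}$, with affine weights adapted to the graph: on $I_1=(-l_1,0)$ take $q_1(x)=x+l_1$, so that $q_1(-l_1)=0$ kills the uncontrolled endpoint, and on each $I_j=(0,l_j)$ take $q_j(x)=x-l_j$, so that $q_j(l_j)=0$ and the only surviving boundary contribution at $x=l_j$ is proportional to $|\partial_x^2 v_j(l_j,t)|^2$, precisely the observation in \eqref{OI_2}. Taking real parts, integrating by parts in $x$ on each edge, integrating in $t$ on $[0,T]$, and summing over $j$ should yield a schematic identity
\begin{equation*}
c\int_0^T\!\!\int_{\mathcal{G}}\bigl(|\partial_x^2 v|^2+|\partial_x v|^2\bigr)\,dx\,dt
\;=\;\mathcal{B}_{\mathrm{obs}}\;+\;\mathcal{B}_{\mathrm{junc}}\;+\;\mathcal{B}_{\mathrm{time}},
\end{equation*}
in which $\mathcal{B}_{\mathrm{obs}}$ is controlled by $\sum_{j=2}^N\|\partial_x^2 v_j(l_j,\cdot)\|_{L^2(0,T)}^2$, the term $\mathcal{B}_{\mathrm{time}}$ is bounded by $\overline{L}\,\|v_T\|_{H^2_0(\mathcal{G})}^2$ via conservation and $|q_j|\leq\overline{L}$, and $\mathcal{B}_{\mathrm{junc}}$ collects the transmission contributions at the central vertex $x=0$.

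The critical obstacle, and where hypothesis \eqref{putaquepario1} enters, is showing that $\mathcal{B}_{\mathrm{junc}}\leq 0$ (or at worst is absorbable by a small fraction of the bulk). The transmission conditions $v_1(0)=\alpha_j v_j(0)$ and $\partial_x^2 v_1(0)=\alpha_j \partial_x^2 v_j(0)$, together with the Kirchhoff equalities for $\partial_x v_1(0)$ and $\partial_x^3 v_1(0)$, reduce $\mathcal{B}_{\mathrm{junc}}$ to a quadratic form in the traces $\{\partial_x^k v_j(0,t)\}_{k=0,\dots,3,\,j=1,\dots,N}$. Invoking Cauchy--Schwarz in the index $j$ together with $\sum_{j=2}^N \alpha_j^{-2}=1$ and $\alpha_j^{-2}\leq(N-1)^{-1}$ then forces this form to have the right sign, so that the junction terms drop out. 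This is the technical heart of the argument and explains the particular algebraic constraints on the $\alpha_j$'s.

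Finally, introducing a rescaling parameter $\varepsilon\in(0,\overline{L}^{-1})$ that shifts the weights $q_j$, using the Poincaré inequality \eqref{poincare} to equate $\int(|\partial_x^2 v|^2+|\partial_x v|^2)\,dx$ with an equivalent $H^2_0(\mathcal{G})$ norm modulo the factor $(L^2+\pi^2)/\pi^2$, and optimising over $\varepsilon$, one should arrive at
\begin{equation*}
\bigl[T^2\,\pi^2\,\varepsilon\,(1-\overline{L}\varepsilon)-\overline{L}(L^2+\pi^2)\bigr]\,\|v_T\|_{H^2_0(\mathcal{G})}^2
\;\les\;\sum_{j=2}^N\|\partial_x^2 v_j(l_j,\cdot)\|_{L^2(0,T)}^2,
\end{equation*}
which gives \eqref{OI_2} with an explicit constant precisely when the bracket is positive, i.e.\ when $T>T_{min}$ as defined in \eqref{condition1}.
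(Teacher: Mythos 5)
There is a genuine flaw in your multiplier choice on the controlled edges, and it is fatal to the argument as written. At $x=l_j$ the adjoint solution satisfies $v_j(l_j,t)=\partial_x v_j(l_j,t)=0$ (see \eqref{adj_bound1}), so in the identity \eqref{identity} \emph{every} boundary contribution at $x=l_j$ vanishes except the single term $\tfrac12\int_0^T|\partial_x^2 v_j(l_j,t)|^2\,q_j(l_j)\,dt$. Your choice $q_j(x)=x-l_j$ forces $q_j(l_j)=0$ and therefore annihilates precisely this term: contrary to what you assert, nothing survives at $x=l_j$, and the resulting identity contains no observed quantity at all. An identity of the schematic form $c\int_0^T\!\!\int_{\mathcal G}(|\partial_x v|^2+|\partial_x^2 v|^2)\,dx\,dt=\mathcal B_{\mathrm{junc}}+\mathcal B_{\mathrm{time}}$ cannot yield \eqref{OI_2}; worse, if your claim that $\mathcal B_{\mathrm{junc}}\le 0$ were true, conservation of the $H^1$--$H^2$ energy would give $T\,\|v_T\|^2_{H^2_0(\mathcal G)}\lesssim \overline L\,\|v_T\|^2_{H^2_0(\mathcal G)}$ for every solution, i.e.\ $v\equiv 0$ once $T$ is large, which is absurd. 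This also signals the second gap: with edge-dependent vertex weights $q_1(0)=l_1$, $q_j(0)=-l_j$, the junction quadratic form acquires coefficients depending on the lengths $l_j$, and the simple Cauchy--Schwarz argument based only on \eqref{putaquepario1} does not give it a sign in general. You have the weights backwards: one needs $q_1(-l_1)=0$ (or another mechanism) at the \emph{uncontrolled} end, but $q_j(l_j)\neq 0$ at the controlled ends so that the observation $|\partial_x^2 v_j(l_j,t)|^2$ actually appears.

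For comparison, the paper avoids this by using two multipliers rather than one edge-adapted affine weight. First, $q\equiv 1$ in \eqref{identity}: there the hypotheses \eqref{putaquepario1} (namely $\sum_{j\ge2}\alpha_j^{-2}=1$ and $\alpha_j^{-2}\le (N-1)^{-1}$) make the vertex contributions of $|\partial_x v_j(0)|^2$ and $|\partial_x^2 v_j(0)|^2$ nonpositive or zero, producing the bound \eqref{newmult_1} for the uncontrolled trace $\int_0^T|\partial_x^2 v_1(-l_1,t)|^2dt$ in terms of the observed traces plus time-boundary terms. Second, $q(x)=x$ (which vanishes at the vertex, so the junction terms simplify) gives the bulk energy bounded by the boundary terms, among them $l_1\int_0^T|\partial_x^2 v_1(-l_1)|^2dt$, which is then absorbed via \eqref{newmult_1}. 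Only after that do the conservation laws, Young's inequality with $\varepsilon\in(0,\overline L^{-1})$, and the Poincaré inequality \eqref{poincare} enter, yielding the threshold $T_{min}$ of \eqref{condition1}. Your final optimisation step mirrors the paper's, but it sits on top of an identity that, with your weights, has lost the observation term, so the proof does not go through as proposed.
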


\begin{proof}
Firstly, taking $f=0$ and choosing $q(x,t)=1$ in \eqref{identity}, we get that
\begin{multline*}
\begin{split}
-\frac{Im}{2}\left.\int_{\mathcal{G}}v\overline{\partial_xv}\right]_0^Tdx+\frac{Im}{2}\left.\int_0^Tv\overline{\partial_tv}\right]_{\partial\mathcal{G}}dt&+\frac{1}{2}\left.\int_0^T|\partial_xv|^2\right]_{\partial\mathcal{G}}dt +\frac{1}{2}\left.\int_0^T|\partial_x^2v|^2\right]_{\partial\mathcal{G}}dt \\&-Re\int_0^T\left[ \partial_x^3 v\overline{\partial_xv}\right]_{\partial\mathcal{G}}dt=0,
\end{split}
\end{multline*}
or equivalently,
\begin{multline*}
\begin{split}
0=&-\frac{Im}{2}\left.\int_{\mathcal{G}}v\overline{\partial_xv}\right]_0^Tdx+\frac{Im}{2}\int_0^T \left( \left[ v_1\overline{\partial_t v}_1\right]_{-l_1}^0 + \left[ \sum_{j=2}^{N} v_j\overline{\partial_t v}_j\right]_{0}^{l_j}\right) dt  \\
& +\frac{1}{2}\int_0^T\left( \left[ |\partial_x v_1|^2 \right]_{-l_1}^0 + \left[ \sum_{j=2}^{N}  |\partial_x v_j|^2 \right]_{0}^{l_j} \right)dt  
+\frac{1}{2}\int_0^T\left( \left[ |\partial_x^2 v_1|^2 \right]_{-l_1}^0 + \left[ \sum_{j=2}^{N}  |\partial_x^2 v_j|^2 \right]_{0}^{l_j} \right)dt\\
& -Re\int_0^T\left(\left[ \partial_x^3 v_1\overline{\partial_x v}_1\right]_{-l_1}^0 + \left[ \sum_{j=2}^{N} \partial_x^3 v_j\overline{ \partial_x v}_j\right]_{0}^{l_j}\right)dt.
 \end{split}
\end{multline*}
By using the boundary conditions \eqref{adj_bound1}, it follows that
\begin{multline*}
\begin{split}
0=&-\frac{Im}{2}\left.\int_{\mathcal{G}}v\overline{\partial_xv}\right]_0^Tdx+\frac{Im}{2}\int_0^T \left(  v_1(0)\overline{\partial_t v}_1(0)  - \sum_{j=2}^{N} v_j(0)\overline{\partial_t v}_j(0)\right) dt \\
&+\frac{1}{2}\int_0^T\left(  |\partial_x v_1(0)|^2- \sum_{j=2}^{N}  |\partial_x v_j(0)|^2 \right)dt  \\
 &+\frac{1}{2}\int_0^T\left(  |\partial_x^2 v_1(0)|^2  -   |\partial_x^2 v_1(-l_1)|^2   +  \sum_{j=2}^{N} \left(  |\partial_x^2 v_j(l_j)|^2 - |\partial_x^2 v_j(0)|^2 \right) \right)dt \\
&-Re\int_0^T\left(\partial_x^3 v_1 (0)\overline{\partial_x v}_1(0)  -  \sum_{j=2}^{N} \partial_x^3 v_j(0)\overline{ \partial_x v}_j(0) \right)dt.
\end{split}
\end{multline*}
Once again, due to the boundary conditions \eqref{adj_bound1} and relations \eqref{putaquepario1}, we have that 
\begin{equation}\label{perfect}
\begin{split}
\int_0^T   |\partial_x^2&v_1(-l_1)|^2 dt =-Im\left.\int_{\mathcal{G}}v\overline{\partial_xv}\right]_0^Tdx  +\int_0^T    \sum_{j=2}^{N}  |\partial_x^2 v_j(l_j)|^2  dt \\
 &+\int_0^T\left(  |\partial_x v_1(0)|^2- \sum_{j=2}^{N}  |\partial_x v_j(0)|^2 \right)dt 
+\int_0^T\left(  |\partial_x^2 v_1(0)|^2- \sum_{j=2}^{N}  |\partial_x^2 v_j(0)|^2 \right)dt.
\end{split}
\end{equation}
 Thanks to relations \eqref{putaquepario1}, we deduce that 
\begin{multline*}
\int_0^T\left(  |\partial_x v_1(0)|^2- \sum_{j=2}^{N}  |\partial_x v_j(0)|^2 \right)dt = \frac{1}{2}\int_0^T\left( \left| \sum_{j=2}^{N} \frac{\partial_x v_j(0)}{\alpha_j}\right |^2- \sum_{j=2}^{N}  |\partial_x v_j(0)|^2 \right)dt \\
\leq \frac{1}{2}\int_0^T\left( (N-1) \sum_{j=2}^{N} \left|\frac{\partial_x v_j(0)}{\alpha_j}\right |^2- \sum_{j=2}^{N}  |\partial_x v_j(0)|^2 \right)dt = \frac{1}{2}\int_0^T \sum_{j=2}^{N}  |\partial_x v_j(0)|^2\left( \frac{N-1}{\alpha_j^2}-1 \right)dt \leq 0 
\end{multline*} 
and
\begin{align*}
\frac{1}{2}\int_0^T\left(  |\partial_x^2 v_1(0)|^2- \sum_{j=2}^{N}  |\partial_x^2 v_j(0)|^2 \right)dt &= \frac{1}{2}\int_0^T\left( |\partial_x^2 v_1(0)|^2- \sum_{j=2}^{N}  \left| \frac{\partial_x^2 v_1(0)}{\alpha_j} \right|^2 \right)dt \\
&= \frac{1}{2}\int_0^T |\partial_x^2 v_1(0)|^2\left( 1- \sum_{j=2}^{N}  \frac{1}{\alpha_j^2} \right)dt= 0.
\end{align*} 
Thus, previous calculations ensure that
\begin{equation}\label{newmult_1}
\int_0^T   |\partial_x^2 v_1(-l_1)|^2 dt \leq Im\left.\int_{\mathcal{G}}v\overline{\partial_xv}\right]_0^Tdx  +\int_0^T    \sum_{j=2}^{N}  |\partial_x^2 v_j(l_j)|^2  dt .
\end{equation}

Now,   choosing  $q(x,t)=x$ in \eqref{identity}, by using the boundary conditions \eqref{adj_bound1} and taking $f=0$, we get
\begin{align*}
2\int_Q|\partial_xv|^2dxdt+4\int_Q|\partial_x^2v|^2dxdt &=\left.\int_0^T|\partial^2_xv|^2x\right]_{\partial\mathcal{G}}dt-Im\left.\int_{\mathcal{G}}v\overline{\partial^2_xv}\right]_0^Tdx \\
&=l_1 \int_0^T   |\partial_x^2 v_1(-l_1)|^2 dt      + \int_0^T  \sum_{j=2}^{N} |\partial_x^2 v_j(l_j)|^{2}l_jdt-Im\left.\int_{\mathcal{G}}v\overline{\partial_xv}x\right]_0^Tdx.
\end{align*}
From inequality \eqref{newmult_1}, it yields that
\begin{multline*}
\begin{split}
2\int_Q|\partial_xv|^2dxdt+4\int_Q|\partial_x^2v|^2dxdt \leq& \ l_1Im\left.\int_{\mathcal{G}}v\overline{\partial_xv}\right]_0^Tdx  +l_1\int_0^T    \sum_{j=2}^{N}  |\partial_x^2 v_j(l_j)|^2  dt     \\
  & + \int_0^T  \sum_{j=2}^{N} |\partial_x^2 v_j(l_j)|^{2}l_jdt-Im\left.\int_{\mathcal{G}}v\overline{\partial_xv}x\right]_0^Tdx,
   \end{split}
\end{multline*}
hence
\begin{equation}\label{OI1_b_1}
2\int_Q|\partial_xv|^2dxdt+4\int_Q|\partial_x^2v|^2dxdt\leq Im\left.\int_{\mathcal{G}}v\overline{\partial_xv} (l_1-x)\right]_0^Tdx  +(\overline{L}+l_1)\int_0^T    \sum_{j=2}^{N}  |\partial_x^2 v_j(l_j)|^2  dt     ,
\end{equation}
where $\overline{L}$ is defined by \eqref{condition2}.

Now, we are in position to prove \eqref{OI_2}. Thanks to \eqref{OI1_b_1},  follows that
\begin{multline}\label{OI1_c_1}
\begin{split}
2\int_Q(|\partial_xv|^2+|\partial_x^2v|^2)dxdt\leq&\ (\overline{L}+l_1)\int_0^T    \sum_{j=2}^{N}  |\partial_x^2 v_j(l_j)|^2  dt  + \left|\left.\int_{\mathcal{G}}v\overline{\partial_xv} (l_1-x)\right]_0^Tdx \right|     \\
\leq & \ L\int_0^T    \sum_{j=2}^{N}  |\partial_x^2 v_j(l_j)|^2  dt + \int_{\mathcal{G}}|v(T)||\overline{\partial_xv(T)}| |l_1-x|dx  \\
&+ \int_{\mathcal{G}}|v(0)||\overline{\partial_xv(0)}| |l_1-x|dx .
\end{split}
\end{multline}
As we have the conservation laws for solutions of \eqref{adj_bound1aa}, that is, \eqref{H12_c} is satisfied, so by using it on the left hand side of \eqref{OI1_c_1}, yields that
\begin{multline}\label{OI1_d_1}
\begin{split}
2\int_0^T(\norm{\partial_xv(T)}^2_{L^2(\mathcal{G})}+&\norm{\partial^2_xv(T)}^2_{L^2(\mathcal{G})})dt
\leq  \ L\int_0^T    \sum_{j=2}^{N}  |\partial_x^2 v_j(l_j)|^2  dt \\ & + \overline{L} \left( \int_{\mathcal{G}}|v(T)||\overline{\partial_xv(T)}| dx   + \int_{\mathcal{G}}|v(0)||\overline{\partial_xv(0)}| |dx\right). 
\end{split}
\end{multline}
Applying Young inequality in \eqref{OI1_d_1}, with  $\varepsilon >0$ satisfying \eqref{condition3}, we deduce that
\begin{multline}\label{OI1_d_1_2}
2T \left(\norm{\partial_xv(T)}^2_{L^2(\mathcal{G})}+\norm{\partial_x^2v(T)}^2_{L^2(\mathcal{G})}\right)
\leq \ L \int_0^T    \sum_{j=2}^{N}  |\partial_x^2 v_j(l_j)|^2  dt \\  + \overline{L}  \left( \frac{1}{\varepsilon T} \int_{\mathcal{G}}|v(T)|^2  dx + \varepsilon T \int_{\mathcal{G}}  |\overline{\partial_xv(T)}|^2 dx   + \frac{1}{\varepsilon T} \int_{\mathcal{G}}|v(0)|^2 dx + \varepsilon  T \int_{\mathcal{G}}|\overline{\partial_xv(0)})|^2 dx\right). 
\end{multline}
Therefore, we have due to \eqref{OI1_d_1_2} and, again using the conservation law, the following estimate
\begin{equation*}
2T(1-\overline{L}\epsilon ) \left( \norm{\partial_xv(T)}^2_{L^2(\mathcal{G})}+\norm{\partial_x^2v(T)}^2_{L^2(\mathcal{G})}\right)
  \leq   L \int_0^T    \sum_{j=2}^{N}  |\partial_x^2 v_j(l_j)|^2  dt +\frac{2\overline{L}  }{\varepsilon T} \norm{v(T)}^2_{L^2(\mathcal{G})}.
\end{equation*}
From relation \eqref{poincare}, we have that 
\begin{multline*}
\begin{split}
2(1-\overline{L}\epsilon )T \left(\norm{\partial_xv(T)}^2_{L^2(\mathcal{G})}+\norm{\partial_x^2v(T)}^2_{L^2(\mathcal{G})}\right)\leq&\ L\int_0^T    \sum_{j=2}^{N}  |\partial_x^2 v_j(l_j)|^2  dt\\ & +\frac{2 M }{\varepsilon T}   \left( \frac{L^2}{\pi^2} +1 \right) (\norm{\partial_xv(T)}^2_{L^2(\mathcal{G})}+\norm{\partial^2_xv(T)}^2_{L^2(\mathcal{G})}).
\end{split}
\end{multline*}
Equivalently, we get that
\begin{equation*}
2\overline{L}\left[ \left( \frac{1}{ \overline{L}} -\epsilon \right) T    - \frac{1}{\varepsilon T}   \left( \frac{L^2}{\pi^2} +1 \right)\right]   \left(\norm{\partial_xv(T)}^2_{L^2(\mathcal{G})}+\norm{\partial_x^2v(T)}^2_{L^2(\mathcal{G})} \right)
\leq L\int_0^T    \sum_{j=2}^{N}  |\partial_x^2 v_j(l_j)|^2  dt.
\end{equation*}
 Note that the conditions \eqref{condition1}, \eqref{condition2}  and \eqref{condition3} imply that 
\begin{equation*}
K=\left[ \left( \frac{1}{\overline{L}} -\epsilon \right) T    - \frac{1}{\varepsilon T}   \left( \frac{L^2}{\pi^2} +1 \right)\right] >0
\end{equation*}
Thus, again using \eqref{poincare}, we achieved the observability inequality \eqref{OI_2}.
\end{proof}

\subsection{Proof of Theorem  \ref{Th_Control_N}}
Notice that Theorem \ref{Th_Control_N} is a consequence  of the observability inequality \eqref{OI_2}. In fact, without loss of generality, pick $u_0=0$ on $\mathcal{G}$. Define  $\Gamma$ the linear and bounded map
$\Gamma:H^2_0(\mathcal{G})\longrightarrow H^{-2}(\mathcal{G})$
by
\begin{equation}
\Gamma(v(\cdot,T))=\langle u(\cdot,T),\overline{v}(\cdot,T)\rangle,
\end{equation}
where $v=v(x,t)$ is solution of \eqref{adj_graph_1a}-\eqref{adj_bound1}, with initial data $v(x,T)=v(T)$,$\left\langle \cdot,\cdot \right\rangle$ means the duality between the spaces  $H^{-2}(\mathcal{G})$ and $H^2_0(\mathcal{G})$,  $u=u(x,t)$ is solution of \eqref{graph}-\eqref{bound}, with  $h_1(t)=0$ and 
\begin{equation}\label{control2a}
h_j(t)=\partial^2_xv_j(l_j,t),
\end{equation}
for $j=2,\cdots,N$. 

According to Lemma \ref{L_CECP_1} and Proposition \ref{Oinequality_1}, we obtain
\begin{equation*}
\langle \Gamma(v(T)),v(T)\rangle =\sum_{j=2}^{N}\norm{h_j(t)}^2_{L^2(0,T)}\geq C^{-1}\norm{v(T)}^2_{H^2_0(\mathcal{G})}.
\end{equation*}
Thus, by the Lax–Milgram theorem, $\Gamma$ is invertible. Consequently, for given  $u(T)\in H^{-2}(\mathcal{G})$, we can define $v(T):=\Gamma^{-1}(u(T))$ which one  solves \eqref{adj_graph_1a}-\eqref{adj_bound1}. Then, if $h_j(t)$, for $j=2,\cdots,N$ is defined by \eqref{control2a}, the corresponding solution $u$ of the system \eqref{graph}-\eqref{bound}, satisfies \eqref{ect} and so, Theorem \ref{Th_Control_N} holds.

\appendix 

\section{Auxiliary lemmas}\label{Sec4}

\subsection{Morawetz multipliers}
This section is dedicated to establishing fundamental identities by the multipliers method, which will be presented in two lemmas. For $f\in L^2(0,T;H^2_0(\mathcal{G}))$, let us consider the following system
\begin{equation}\label{adj_graph_1aa}
\begin{cases}
i\pt u_j +\px^2 u_j - \px^4 u_j =f, & (t,x)\in (0,T) \times   I_j,\ j=1,2, ..., N\\
u_j(0,x)=u_{j0}(x),                                   & x\in I_j,\ j=1,2, ..., N\\
\end{cases}
\end{equation}
with the boundary conditions
\begin{equation}\label{adj_bound1aa}
\left\lbrace
\begin{split}
&u_1(-l_1,t )=\partial_x u_1(-l_1,t)=0\\
& u_j(l_j,t)= \partial_x u_j(l_j,t)=0, \quad j \in \{2,3,\cdots, N\}, \\
& u_1(0,t)=\alpha_j u_j(0,t),  \quad j \in \{2,3,\cdots, N\}, \\
&\partial_x u_1(0,t)=\sum_{j=2}^{N}\frac{\partial_x u_j(0,t)}{\alpha_j} \\
& \partial_x^2 u_1(0)= \alpha_j \partial_x^2 u_j(0,t), \quad j \in \{2,3,\cdots, N\}, \\
&\partial_x^3 u_1(0,t)=\sum_{j=2}^{N}\frac{\partial_x^3 u_j(0,t)}{\alpha_j}.
\end{split}\right.
\end{equation}
The first lemma gives us an identity which will help us to prove the main result of this article. 
\begin{lemma}\label{Id_Obs}
Let $q=q(x,t)\in C^4(\overline{\mathcal{G}}\times(0,T),\mathbb{R})$ with $\overline{\mathcal{G}}$ being the closed set of $\mathcal{G}$. For every solution of \eqref{adj_bound1aa}-\eqref{adj_graph_1aa} with $f \in\mathcal{D}(\mathcal{G})$ and $u_0\in\mathcal{D}(\mathcal{G})$, the following identity holds:
\begin{equation}\label{identity}
\begin{split}
&\frac{Im}{2}\int_Qu\overline{\partial_xu}\partial_tqdxdt-\int_Q|\partial_xu|^2\partial_xqdxdt-2\int_Q|\partial_x^2u|^2\partial_xqdxdt-\frac{Re}{2}\int_{Q}\partial_xu\overline{u}\partial_x^2qdxdt\\
&+\frac{3}{2}\int_{Q}|\partial_xu|^2\partial_x^3qdxdt+\frac{Re}{2}\int_Q\partial_xu\overline{u}\partial_x^4qdxdt-\frac{Im}{2}\left.\int_{\mathcal{G}}u\overline{\partial_xu}q\right]_0^Tdx+\frac{1}{2}\left.\int_0^T|\partial_x^2u|^2q\right]_{\partial\mathcal{G}}dt\\
&+\frac{Im}{2}\left.\int_0^Tu\overline{\partial_tu}q\right]_{\partial\mathcal{G}}dt+\frac{1}{2}\left.\int_0^T|\partial_xu|^2q\right]_{\partial\mathcal{G}}dt+\frac{Re}{2}\left.\int_0^T\partial_xu\overline{u}\partial_xq\right]_{\partial\mathcal{G}} dt\\
&+\int_0^T\left[-|\partial_xu|^2\partial_x^2q+\frac{3}{2}Re(\partial_x^2u\overline{\partial_xu})\partial_xq-Re(\partial_x^3u\overline{\partial_xu})q\right]_{\partial\mathcal{G}}dt\\
&+\int_0^T\left[-\frac{Re}{2}(\partial_xu\overline{u})\partial_x^3q+\frac{Re}{2}(\partial_x^2u\overline{u})\partial_x^2q-\frac{Re}{2}(\partial_x^3u\overline{u})\partial_xq\right]_{\partial\mathcal{G}}dt=\int_Qf(\overline{\partial_xu}q+\frac{1}{2}\overline{u}\partial_xq)dxdt,
\end{split}
\end{equation}
where $Q:=\mathcal{G}\times[0,T]$ and $\partial\mathcal{G}$ is the boundary of $\mathcal{G}$.
\end{lemma}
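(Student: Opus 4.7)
The plan is a direct Morawetz multiplier computation. Define $\psi := q\,\partial_x u + \tfrac{1}{2} u\,\partial_x q$ and multiply the PDE $i\partial_t u + \partial_x^2 u - \partial_x^4 u = f$ by $\overline{\psi}$, integrate on each edge $I_j \times [0,T]$, sum over $j$, and take the real part. Since $q$ is real, the right-hand side becomes exactly $\int_Q f(\overline{\partial_x u}\,q + \tfrac{1}{2}\overline{u}\,\partial_x q)\,dx\,dt$, so everything reduces to integrating by parts the three terms on the left, which I would handle separately.

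For $\mathrm{Re}\int_Q i\,\partial_t u\,\overline{\psi}\,dx\,dt$, I would use $\mathrm{Re}(iz)=-\mathrm{Im}(z)$ and integrate by parts in $t$, pushing $\partial_t$ off $u$. This produces the volume term $\tfrac{\mathrm{Im}}{2}\int_Q u\,\overline{\partial_x u}\,\partial_t q$, the temporal boundary piece $-\tfrac{\mathrm{Im}}{2}\bigl[\int_{\mathcal G} u\,\overline{\partial_x u}\,q\,dx\bigr]_0^T$, and the $\partial\mathcal{G}$-boundary term $\tfrac{\mathrm{Im}}{2}\int_0^T [u\,\overline{\partial_t u}\,q]_{\partial\mathcal{G}}\,dt$ that appears in \eqref{identity}.

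For $\mathrm{Re}\int_Q \partial_x^2 u\,\overline{\psi}\,dx\,dt$, two successive IBPs in $x$ and the identity $\mathrm{Re}(\partial_x u\,\overline{\partial_x^2 u})=\tfrac12\partial_x|\partial_x u|^2$ yield the volume terms $-\int_Q|\partial_x u|^2\partial_x q - \tfrac{\mathrm{Re}}{2}\int_Q \partial_x u\,\overline{u}\,\partial_x^2 q$ together with the $\partial\mathcal{G}$-boundary contributions $\tfrac{1}{2}[|\partial_x u|^2 q]_{\partial\mathcal{G}}$ and $\tfrac{\mathrm{Re}}{2}[\partial_x u\,\overline{u}\,\partial_x q]_{\partial\mathcal{G}}$. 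For $-\mathrm{Re}\int_Q \partial_x^4 u\,\overline{\psi}$, three IBPs in $x$, combined with the analogous identity $\mathrm{Re}(\partial_x^2 u\,\overline{\partial_x^3 u})=\tfrac12\partial_x|\partial_x^2 u|^2$, give the remaining volume terms $-2\int_Q|\partial_x^2 u|^2 \partial_x q$, $\tfrac{3}{2}\int_Q|\partial_x u|^2\partial_x^3 q$ and $\tfrac{\mathrm{Re}}{2}\int_Q \partial_x u\,\overline{u}\,\partial_x^4 q$, plus every spatial-boundary term listed in the last two lines of \eqref{identity}: $|\partial_x^2 u|^2 q$, $\mathrm{Re}(\partial_x^2 u\,\overline{\partial_x u})\,\partial_x q$, $\mathrm{Re}(\partial_x^3 u\,\overline{\partial_x u})\,q$, $\mathrm{Re}(\partial_x u\,\overline{u})\,\partial_x^3 q$, $\mathrm{Re}(\partial_x^2 u\,\overline{u})\,\partial_x^2 q$ and $\mathrm{Re}(\partial_x^3 u\,\overline{u})\,\partial_x q$.

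The main obstacle is the bookkeeping in the biharmonic step: three consecutive IBPs generate a long list of boundary contributions, and it is easy to miscount coefficients or signs, especially because the computation must be performed on each edge $I_j$ separately, with $[\cdot]_{\partial\mathcal{G}}$ understood to collect all endpoint values (at $-l_1$, at $0$ from each side, and at $l_j$) with the orientation conventions of the graph. The regularity assumptions $f,u_0\in\mathcal{D}(\mathcal{G})$ and $q\in C^4(\overline{\mathcal{G}}\times(0,T))$ justify all the integrations by parts; notice that \emph{no} boundary conditions from \eqref{adj_bound1aa} are invoked at this stage, since \eqref{identity} keeps all boundary terms explicit (they will only be simplified later, when the identity is specialized in the proof of Proposition \ref{Oinequality_1}).
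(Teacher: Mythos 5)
Your proposal follows essentially the same route as the paper: multiply the equation by the Morawetz multiplier $\overline{\partial_x u}\,q+\tfrac{1}{2}\overline{u}\,\partial_x q$, split into the time-derivative, Laplacian and bi-Laplacian integrals, integrate by parts on $Q$, take real parts, and keep all $\partial\mathcal{G}$-terms explicit without invoking the boundary conditions \eqref{adj_bound1aa}, exactly as in the paper's Steps 1--3. The terms you list for each step coincide with the paper's \eqref{i1}, \eqref{i2} and \eqref{i3}, so the argument is correct as proposed.
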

\begin{proof} Multiplying \eqref{adj_graph_1aa} by $\overline{\partial_xu}q+\frac{1}{2}\overline{u}\partial_xq$, we have that
\begin{equation*}
\begin{split}
&\int_Qi\partial_tu(\overline{\partial_xu}q+\frac{1}{2}\overline{u}\partial_xq)dxdt+\int_Q\partial_x^2u(\overline{\partial_xu}q+\frac{1}{2}\overline{u}\partial_xq)dxdt-\int_Q\partial_x^4u(\overline{\partial_xu}q+\frac{1}{2}\overline{u}\partial_xq)dxdt\\
&-\int_Qf(\overline{\partial_xu}q+\frac{1}{2}\overline{u}\partial_xq)dxdt:=I_1+I_2-I_3-\int_Qf(\overline{\partial_xu}q+\frac{1}{2}\overline{u}\partial_xq)dxdt.
\end{split}
\end{equation*}

Now, we split the proof in three steps.

\vspace{0.1cm}

\noindent\textbf{Step 1.} Analysis of $I_1$.

\vspace{0.1cm}

Integrating by parts, several times, on $Q$, we get

\begin{equation*}
\begin{split}
I_1=&-i\int_Qu\overline{\partial_t\partial_xu}qdxdt-i\int_Qu\overline{\partial_xu}\partial_tqdxdt+i\left.\int_{\mathcal{G}}u\overline{\partial_xu}q\right]_0^Tdx+\frac{i}{2}\int_Q\partial_xu\overline{\partial_tu}qdxdt+\frac{i}{2}\int_Qu\overline{\partial_t\partial_xu}qdxdt\\
&-\frac{i}{2}\left.\int_0^Tu\overline{\partial_tu}q\right]_{\partial\mathcal{G}}dt+\frac{i}{2}\int_Q\partial_xu\overline{u}\partial_tqdxdt+\frac{i}{2}\int_Qu\overline{\partial_xu}\partial_tqdxdt-\frac{i}{2}\left.\int_0^Tu\overline{u}\partial_tq\right]_{\partial\mathcal{G}}dt+\frac{i}{2}\left.\int_{\mathcal{G}}u\overline{u}\partial_xq\right]_0^Tdx\\
=&-\frac{i}{2}\int_Qu\overline{\partial_t\partial_xu}qdxdt-\frac{i}{2}\int_Qu\overline{\partial_xu}\partial_tqdxdt-\frac{i}{2}\int_{Q}\partial_t\partial_xu\overline{u}qdxdt+i\left.\int_{\mathcal{G}}u\overline{\partial_xu}q\right]_0^Tdx-\frac{i}{2}\left.\int_0^Tu\overline{\partial_tu}q\right]_{\partial\mathcal{G}}dt\\
&-\frac{i}{2}\left.\int_{\mathcal{G}}u\overline{\partial_xu}q\right]_0^Tdx-\left.\frac{i}{2}\int_{\mathcal{G}}u\overline{u}\partial_xq\right]_0^Tdx+\frac{i}{2}\left.\left[u\overline{u}q\right]_0^T\right]_{\partial\mathcal{G}}-\frac{i}{2}\left.\int_0^T u\overline{u}\partial_tq\right]_{\partial\mathcal{G}}dt+\frac{i}{2}\left.\int_{\mathcal{G}}u\overline{u}\partial_xq\right]_0^Tdx.
\end{split}
\end{equation*}
Thus, putting together the similar terms in the  last equality, yields that
\begin{equation}\label{i1a}
\begin{split}
I_1=&-\frac{i}{2}\int_Q(u\overline{\partial_t\partial_xu}+\partial_t\partial_xu\overline{u})qdxdt-\frac{i}{2}\int_Qu\overline{\partial_xu}\partial_tqdxdt+\frac{i}{2}\left.\int_{\mathcal{G}}u\overline{\partial_xu}q\right]_0^Tdx\\
&-\frac{i}{2}\left.\int_0^Tu\overline{\partial_tu}q\right]_{\partial\mathcal{G}}dt+\frac{i}{2}\left.\left[u\overline{u}q\right]_0^T\right]_{\partial\mathcal{G}}-\frac{i}{2}\left.\int_0^T u\overline{u}\partial_tq\right]_{\partial\mathcal{G}}dt.
\end{split}
\end{equation}
Finally, taking the real part of \eqref{i1a}, we have
\begin{equation}\label{i1}
\begin{split}
Re(I_1)=\frac{Im}{2}\int_Qu\overline{\partial_xu}\partial_tqdxdt-\frac{Im}{2}\left.\int_{\mathcal{G}}u\overline{\partial_xu}q\right]_0^Tdx
+Re\left(-\frac{i}{2}\left.\int_0^Tu\overline{\partial_tu}q\right]_{\partial\mathcal{G}}dt\right).
\end{split}
\end{equation}

\noindent\textbf{Step 2.} Analysis of the Laplacian integral $I_2$.

\vspace{0.1cm}

Integrating by parts, several times, on $Q$ and taking the real part of $I_2$, follows that
\begin{equation*}
\begin{split}
Re(I_2)=&\int_QRe(\partial_x^2u\overline{\partial_xu}q)dxdt+\frac{Re}{2}\left(\int_Q\partial^2_xu\overline{u}\partial_xqdxdt\right)\\
=&-\frac{1}{2}\int_Q|\partial_xu|^2\partial_xqdxdt+\left.\frac{1}{2}\int_0^T|\partial_xu|^2q\right]_{\partial\mathcal{G}}dt-\frac{1}{2}\int_Q|\partial_xu|^2\partial_xqdxdt\\
&-\frac{Re}{2}\left(\int_Q\partial_xu\overline{u}\partial^2_xqdxdt\right)+\frac{Re}{2}\left.\left(\int_0^T\partial_xu\overline{u}\partial_xq\right]_{\partial\mathcal{G}}dt\right).
\end{split}
\end{equation*}
Consequently,
\begin{equation}\label{i2}
\begin{split}
Re(I_2)=&-\int_Q|\partial_xu|^2\partial_xqdxdt-\frac{Re}{2}\left(\int_Q\partial_xu\overline{u}\partial^2_xqdxdt\right)\\
&+\left.\frac{1}{2}\int_0^T|\partial_xu|^2q\right]_{\partial\mathcal{G}}dt+\frac{Re}{2}\left.\left(\int_0^T\partial_xu\overline{u}\partial_xq\right]_{\partial\mathcal{G}}dt\right).
\end{split}
\end{equation}

\noindent\textbf{Step 3.} Analysis of the bi-Laplacian integral $I_3$.

\vspace{0.1cm}

Integrating by parts on $Q$ give us
\begin{equation}\label{i3a}
\begin{split}
-I_3=&\int_Q\partial^3_xu\overline{\partial^2_xu}qdxdt+\int_Q\partial^3_xu\overline{\partial_xu}\partial_xqdxdt-\left.\int_0^T\partial^3_xu\overline{\partial_xu}q\right]_{\partial\mathcal{G}}dt\\
&+\frac{1}{2}\int_Q\partial^3_xu\overline{\partial_xu}\partial_xqdxdt+\frac{1}{2}\int_Q\partial^3_xu\overline{u}\partial^2_xqdxdt-\left.\frac{1}{2}\int_0^T\partial^3_xu\overline{u}\partial_xq\right]_{\partial\mathcal{G}}dt.
\end{split}
\end{equation}
Taking the real part of \eqref{i3a} and integrating, again, by parts on $Q$, we have
\begin{equation*}
\begin{split}
-Re(I_3)=&-\frac{1}{2}\int_Q|\partial^2_xu|^2\partial_xqdxdt+\frac{1}{2}\left.\int_0^T|\partial^2_xu|^2q\right]_{\partial\mathcal{G}}dt-\frac{3}{2}\int_Q|\partial_x^2u|^2\partial_xqdxdt+\int_Q|\partial_xu|^2\partial^3_xqdxdt\\
&-\left.\int_0^T|\partial_xu|^2\partial^2_xq\right]_{\partial\mathcal{G}}dt+\frac{3Re}{2}\left(\left.\int_0^T\partial^2_xu\overline{\partial_xu}\partial_xq\right]_{\partial\mathcal{G}}dt\right)-Re\left(\left.\int_0^T\partial^3_xu\overline{\partial_xu}q\right]_{\partial\mathcal{G}}dt\right)\\
&+\frac{Re}{2}\left(\int_Q|\partial_xu|^2\partial^3_xqdxdt\right)+\frac{Re}{2}\left(\int_Q\partial_xu\overline{u}\partial_x^4qdxdt\right)-\frac{Re}{2}\left.\left(\int_0^T\partial_xu\overline{u}\partial_x^3q\right]_{\partial\mathcal{G}}dt\right)\\
&+\frac{Re}{2}\left.\left(\int_0^T\partial^2_xu\overline{u}\partial_x^2q\right]_{\partial\mathcal{G}}dt\right)-\frac{Re}{2}\left.\left(\int_0^T\partial^3_xu\overline{u}\partial_xq\right]_{\partial\mathcal{G}} dt\right).
\end{split}
\end{equation*}
The last inequality ensure that
\begin{equation}\label{i3}
\begin{split}
-Re(I_3)=&-2\int_Q|\partial_x^2u|^2\partial_xqdxdt+\frac{3}{2}\int_Q|\partial_xu|^2\partial^3_xqdxdt+\frac{Re}{2}\left(\int_Q\partial_xu\overline{u}\partial^4_xqdxdt\right)\\
&+\frac{1}{2}\left.\int_0^T|\partial^2_xu|^2q\right]_{\partial\mathcal{G}}dt-\left.\int_0^T|\partial_xu|^2\partial^2_xq\right]_{\partial\mathcal{G}}dt+\frac{3Re}{2}\left(\left.\int_0^T\partial^2_xu\overline{\partial_xu}\partial_xq\right]_{\partial\mathcal{G}}dt\right)\\
&-Re\left(\left.\int_0^T\partial^3_xu\overline{\partial_xu}q\right]_{\partial\mathcal{G}}dt\right)-\frac{Re}{2}\left.\left(\int_0^T\partial_xu\overline{u}\partial_x^3q\right]_{\partial\mathcal{G}}dt\right)\\
&+\frac{Re}{2}\left.\left(\int_0^T\partial^2_xu\overline{u}\partial^2_xq\right]_{\partial\mathcal{G}}dt\right)-\frac{Re}{2}\left.\left(\int_0^T\partial^3_xu\overline{u}\partial_xq\right]_{\partial\mathcal{G}} dt\right).
\end{split}
\end{equation}

Finally, taking into account \eqref{i1}, \eqref{i2} and \eqref{i3}, we have
$$Re(I_1)+Re(I_2)-Re(I_3)=\int_Qf(\overline{\partial_xu}q+\frac{1}{2}\overline{u}\partial_xq)dxdt,$$
then \eqref{identity} holds.
\end{proof}

\subsection{Conservation laws}  
The final result of the appendix gives us the conservation laws for the solutions of \eqref{adj_graph_1aa}-\eqref{adj_bound1aa}, with $f=0$. More precisely, the result is the following one.

\begin{lemma}\label{conservations}
For any positive time t, the solution $u$ of \eqref{adj_graph_1aa}-\eqref{adj_bound1aa}, with $f=0$, satisfies
\begin{equation}\label{L2_c}
\norm{u(t)}_{L^2(\mathcal{G})}=\norm{u(0)}_{L^2(\mathcal{G})}
\end{equation}
and 
\begin{equation}\label{H12_c}
\norm{\partial_xu(t)}_{L^2(\mathcal{G})}+\norm{\partial_x^2u(t)}_{L^2(\mathcal{G})}=\norm{\partial_xu(0)}_{L^2(\mathcal{G})}+\norm{\partial^2_xu(0)}_{L^2(\mathcal{G})}.
\end{equation}
Additionally, we have
\begin{equation}\label{H2_c}
\norm{u(t)}_{L^2(\mathcal{G})}+\norm{\partial_xu(t)}_{L^2(\mathcal{G})}+\norm{\partial_x^2u(t)}_{L^2(\mathcal{G})}=\norm{u(0)}_{L^2(\mathcal{G})}+\norm{\partial_xu(0)}_{L^2(\mathcal{G})}+\norm{\partial^2_xu(0)}_{L^2(\mathcal{G})}.
\end{equation}
\end{lemma}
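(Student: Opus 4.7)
The strategy is to obtain each conservation identity by an energy/multiplier computation, the only nontrivial point being that all boundary contributions produced by integration by parts cancel thanks to the transmission conditions in \eqref{adj_bound1aa}. Once \eqref{L2_c} and \eqref{H12_c} are established, identity \eqref{H2_c} follows by addition, so I focus on the first two. Throughout, it is enough to argue for smooth data in $D(A)$ and to conclude by density using Proposition \ref{mild}.

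For \eqref{L2_c}, I would multiply the equation $i\partial_t u_j + \partial_x^2 u_j - \partial_x^4 u_j = 0$ by $\overline{u_j}$, integrate on each $I_j$, sum over $j$ and take the imaginary part. The volume contributions coming from $\partial_x^2 u$ and $\partial_x^4 u$ are real after integration by parts, so only the $L^{2}$ time derivative and the boundary terms survive, giving
\begin{equation*}
\frac{1}{2}\frac{d}{dt}\norm{u(t)}_{L^{2}(\mathcal{G})}^{2} + \mathrm{Im}\bigl[\partial_x u\,\overline{u} - \partial_x^{3} u\,\overline{u} + \partial_x^{2} u\,\overline{\partial_x u}\bigr]_{\partial\mathcal{G}} = 0.
\end{equation*}
At $x=-l_{1}$ and at $x=l_{j}$ the Dirichlet data $u=\partial_x u=0$ kill every boundary term. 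At the central vertex $x=0$, the $\partial_x u\,\overline{u}$ and $\partial_x^{3} u\,\overline{u}$ contributions cancel by pairing $u_{1}(0)=\alpha_{j}u_{j}(0)$ with $\partial_x u_{1}(0)=\sum_{j}\partial_x u_{j}(0)/\alpha_{j}$ (respectively with the analogous relation for $\partial_x^{3}u$), while the $\partial_x^{2} u\,\overline{\partial_x u}$ contribution vanishes after combining $\partial_x^{2} u_{1}(0)=\alpha_{j}\partial_x^{2} u_{j}(0)$ with the transmission condition on $\partial_x u_{1}(0)$. This is exactly the computation already used in the proof of Proposition \ref{selfadjoint}.

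For \eqref{H12_c}, I would define $E(t):=\norm{\partial_x u(t)}_{L^{2}(\mathcal{G})}^{2}+\norm{\partial_x^{2} u(t)}_{L^{2}(\mathcal{G})}^{2}$ (which is the quantity actually invoked in the proof of Proposition \ref{Oinequality_1}) and differentiate in time. Two and four integrations by parts in $x$ respectively give
\begin{equation*}
\tfrac{1}{2}\dot{E}(t) = -\mathrm{Re}\int_{\mathcal{G}}(\partial_x^{2} u-\partial_x^{4} u)\overline{\partial_t u}\,dx + \mathrm{Re}\bigl[\partial_x u\,\overline{\partial_t u} - \partial_x^{3} u\,\overline{\partial_t u} + \partial_x^{2} u\,\overline{\partial_x\partial_t u}\bigr]_{\partial\mathcal{G}}.
\end{equation*}
Substituting $\partial_x^{2} u-\partial_x^{4} u = -i\partial_t u$ turns the volume term into $\mathrm{Re}\int i|\partial_t u|^{2}\,dx = 0$, so only the boundary terms remain. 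These vanish by the same vertex bookkeeping as above, now applied to the time-differentiated transmission conditions $\partial_t u_{1}(0)=\alpha_{j}\partial_t u_{j}(0)$ and $\partial_x\partial_t u_{1}(0)=\sum_{j}\partial_x\partial_t u_{j}(0)/\alpha_{j}$, which hold for classical solutions because the vertex relations are preserved by the flow. Integrating in time gives $E(t)=E(0)$.

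The main obstacle is the careful accounting at the central vertex: for each multiplier one must verify that the quadratic boundary forms at $x=0$ cancel as a consequence of the four transmission conditions in \eqref{adj_bound1aa}, and it is worth stressing that no use of the scaling assumption \eqref{putaquepario1} is needed here. The conservation laws rest only on the graph-geometric structure of the boundary operator, not on the additional normalization on the $\alpha_j$'s used later to enforce the sign conditions in the observability estimate. Once that cancellation pattern is recognized as the same one used in the self-adjointness computation, the remainder of the proof is mechanical, and the extension from $D(A)$ to $H_0^{k}(\mathcal{G})$ data follows by the continuous dependence contained in Proposition \ref{mild}.
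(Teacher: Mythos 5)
Your proposal is correct and follows essentially the same route as the paper: the $L^{2}$ law via the multiplier $\overline{u}$ (imaginary part) and the $\dot H^{1}\cap\dot H^{2}$ law via the multiplier $\overline{\partial_t u}$ (equivalently, differentiating $E(t)$ and substituting the equation), with all boundary terms cancelling at the external nodes by the Dirichlet--Neumann data and at the central vertex by the transmission conditions and their time derivatives, then adding the two identities. Your added observations --- that the normalization \eqref{putaquepario1} on the $\alpha_j$ plays no role here and that one argues first in $D(A)$ and concludes by density via Proposition \ref{mild} --- are accurate refinements of the paper's argument, not a different method.
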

\begin{proof} Multiplying system \eqref{adj_graph_1aa}-\eqref{adj_bound1aa}, with $f=0$, by $i\overline{u}$ and integrating in $\mathcal{G}$, we get 
\begin{equation}\label{L2_caa}
0=-\int_{\mathcal{G}}\pt u_j \overline{u}dx+i\int_{\mathcal{G}}\px^2 u_j \overline{u}dx -i\int_{\mathcal{G}} \px^4 u_j\overline{u}dx=L_1+L_2+L_3.
\end{equation}
We are now looking for the integral $L_2+L_3$. Let us, first, rewrite these quantities as follows
\begin{equation}\label{L2_ca}
\begin{split}
L_2+L_3=i\sum_{j=1}^{N}\left(\int_{I_j}\partial^2_xu_j\overline{u}_jdx-\int_{I_j}\partial^4_xu_j\overline{u}_jdx\right).
\end{split}
\end{equation}
Integrating \eqref{L2_ca} by parts and taking the real part of $L_2+L_3$, we get that
\begin{equation}\label{L2_ca1}
\begin{split}
Re(L_2+L_3)=&Re\left( i\sum_{j=1}^{N}\left(\left.-\int_{I_j}|\partial_xu_j|^2dx+\partial_xu_j\overline{u}_j\right]_{\partial I_j}\right)\right)\\
&+Re\left(i\sum_{j=1}^{N}\left(-\left.\left.\int_{I_j}|\partial^2_xu_j|^2dx+\partial^2_xu_j\partial_x\overline{u}_j\right]_{\partial I_j}-\partial^3_xu_j\overline{u}_j\right]_{\partial I_j}\right)\right)\\
=&Re\left.i\left(\partial_xu_1\overline{u}_1+\partial^2_xu_1\partial_x\overline{u}_1-\partial^3_xu_1\overline{u}_1\right)\right]_{-l_1}^0\\
&+Re\left.i\sum_{j=2}^{N}\left(\partial_xu_j\overline{u}_j+\partial^2_xu_j\partial_x\overline{u}_j-\partial^3_xu_j\overline{u}_j\right)\right]_0^{l_j}.
\end{split}
\end{equation}
By using the boundary conditions \eqref{adj_bound1aa}, we have
\begin{equation}\label{L2_ca2}
\begin{split}
Re(L_2+L_3)=&Re \,\, i\left(\partial_xu_1(0)\overline{u}_1(0)+\partial^2_xu_1(0)\partial\overline{u}_1(0)-\partial^3u_1(0)\overline{u}_1(0)\right.\\
&+Re\left.i\sum_{j=2}^{N}\left(-\partial_xu_j(0)\overline{u}_j(0)-\partial^2_xu_j(0)\partial_x\overline{u}_j(0)+\partial^3_xu_j(0)\overline{u}_j(0)\right)\right.=0.
\end{split}
\end{equation}
Thus, replacing \eqref{L2_ca2} in \eqref{L2_caa}, \eqref{L2_c} holds.

We will prove \eqref{H12_c}. Multiplying \eqref{adj_graph_1aa}, with $f=0$, by $\overline{\partial_tu}$, integrating on $\mathcal{G}$ and taking the real part give us
\begin{equation}\label{L2_ca2a}
Re\left(i\int_{\mathcal{G}}|\partial_tu|^2dx\right)+Re\left(\int_{\mathcal{G}}\partial^2_xu\overline{\partial_tu}dx\right)-Re\left(\int_{\mathcal{G}}\partial^4_xu\overline{\partial_tu}dx\right)=0.
\end{equation}
Integrating \eqref{L2_ca2a} by parts on $\mathcal{G}$ and using the boundary conditions \eqref{adj_bound1aa}, yields that
\begin{equation}\label{L2_ca2aa}
\begin{split}
\frac{1}{2}\frac{\partial}{\partial_t}\int_{\mathcal{G}}&\left(|\partial_xu|^2+|\partial^2_xu|^2\right)dx=-Re\left[\partial^2_xu_1(0)\partial_x\partial_t\overline{u}_1(0)+\partial_xu_1(0)\partial_t\overline{u}_1(0)-\partial^3_xu_1(0)\partial_t\overline{u}_1(0)\right]\\
&+\sum_{j=2}^{N}\left(-\partial^2_xu_j(0)\partial_x\partial_t\overline{u}_j(0)-\partial_xu_j(0)\partial_t\overline{u}_j(0)+\partial^3_xu_j(0)\partial_t\overline{u}_j(0)\right).
\end{split}
\end{equation}
The boundary condition give us that the right hand side of \eqref{L2_ca2aa} is zero, that is
$$\frac{1}{2}\frac{\partial}{\partial_t}\int_{\mathcal{G}}\left(|\partial_xu|^2+|\partial^2_xu|^2\right)dx=0,$$
which implies \eqref{H12_c}. Finally, adding \eqref{L2_c} and \eqref{H12_c}, we have \eqref{H2_c}.
\end{proof}

\subsection*{Acknowledgments} 
Capistrano–Filho was supported by CNPq 408181/2018-4, CAPES-PRINT 88881.311964/2018-01, CAPES-MATHAMSUD 88881.520205/2020-01, MATHAMSUD 21- MATH-03 and Propesqi (UFPE). Cavalcante was supported by CAPES-MATHAMSUD 88887.368708/2019-00. Gallego was supported by MATHAMSUD 21-MATH-03  and the 100.000 Strong in the Americas Innovation Fund. This work was carried out during visits of the authors to the Universidade Federal de Alagoas, Universidade Federal de Pernambuco and Universidad Nacional de Manizales. The authors would like to thank the universities for its hospitality.

\end{document}